\newcommand{\G}{\Gamma (G, \mathcal A)}
\renewcommand{\H}{\mathcal{H}}
\newcommand{\Hl}{\{ H_\lambda\}_{\lambda \in \Lambda}}
\newcommand{\Km}{\{ K_\mu\}_{\mu \in M}}
\newcommand{\Lab}{{\rm Lab}}
\newcommand{\h}{\hookrightarrow_h}
\newcommand{\e}{\varepsilon}
\newcommand{\N}{\mathbb{N}}
\newcommand{\Z}{\mathbb{Z}}
\renewcommand{\d}{{\rm d}}
\newcommand{\dl}{{\widehat{\d}}_\lambda}
\newcommand{\hd}{\widehat{\d}}
\renewcommand{\ll }{\langle\hspace{-.7mm}\langle }
\newcommand{\rr }{\rangle\hspace{-.7mm}\rangle }
\newtheorem{thm}{Theorem}[section]
\newtheorem{cor}[thm]{Corollary}
\newtheorem{lem}[thm]{Lemma}
\newtheorem{prop}[thm]{Proposition}
\theoremstyle{definition}
\newtheorem{defn}[thm]{Definition}
\newtheorem{conv}[thm]{Convention}
\theoremstyle{remark}
\newtheorem{rem}[thm]{Remark}
\newfont{\eufm}{eufm10}
\begin{document}

\title{Acylindrically hyperbolic groups with exotic properties}

\author{Ashot Minasyan}
\address[A. Minasyan]{Mathematical Sciences,
University of Southampton, Highfield, Southampton, SO17 1BJ, United
Kingdom.}
\email{aminasyan@gmail.com}
\author{Denis Osin}
\thanks{The work of the second author has been supported by the NSF grant DMS-1612473.}
\address[D. Osin]{Department of Mathematics, Vanderbilt University,
1326 Stevenson Center, Nashville, TN 37240, USA.}
\email{denis.v.osin@vanderbilt.edu}

\date{}

\begin{abstract}
We prove that every countable family of countable acylindrically hyperbolic groups has a common finitely generated acylindrically hyperbolic quotient. As an application, we obtain an acylindrically hyperbolic group $Q$ with strong fixed point properties: $Q$ has property $FL^p$ for all $p\in [1, +\infty)$, and every action of $Q$ on a finite dimensional contractible topological space has a fixed point. In addition, $Q$ has other properties which are rather unusual for groups exhibiting ``hyperbolic-like'' behaviour. E.g., $Q$ is not uniformly non-amenable and has finite generating sets with arbitrary large balls consisting of torsion elements.
\end{abstract}

\maketitle
\section{Introduction}

An isometric action of a group $G$ on a metric space $S$ is {\it acylindrical} if for every $\e>0$ there exist $R,N>0$ such that for every two points $x,y$ with $\d (x,y)\ge R$, there are at most $N$ elements $g\in G$ satisfying
$$
\d(x,gx)\le \e \quad \mbox{and} \quad \d(y,gy) \le \e.
$$
A group $G$ is called \emph{acylindrically hyperbolic} if it admits a non-elementary acylindrical action on a hyperbolic space; equivalently, $G$ is not virtually cyclic and acts on a hyperbolic space acylindrically with unbounded orbits.

The class of acylindrically hyperbolic groups was introduced in \cite{Osi16} and includes many examples of interest: non-elementary hyperbolic and relatively hyperbolic groups, all but finitely many mapping class groups of punctured closed surfaces, $Out (F_n)$ for $n\ge 2$, finitely presented groups of deficiency at least $2$, most $3$-manifold groups, etc. On the other hand, many aspects of the theory of hyperbolic and relatively hyperbolic groups can be generalized in the context of acylindrical hyperbolicity (see \cite{Osi17} and references therein).

In \cite[Corollary 1.6]{Hull}, Hull proved that any two finitely generated acylindrically hyperbolic groups have a common acylindrically hyperbolic quotient. The main goal of this paper is to prove the following strengthening of his result.

\begin{thm}\label{main}
Every countable family of countable acylindrically hyperbolic groups has a common finitely generated acylindrically hyperbolic quotient.
\end{thm}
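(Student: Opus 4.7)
My plan is to deduce Theorem~\ref{main} by combining two ingredients: a \emph{small-cancellation reduction} that replaces any countable acylindrically hyperbolic group by a finitely generated acylindrically hyperbolic quotient, and an \emph{amalgamation} that collects the whole countable family into a single countable acylindrically hyperbolic group to which the reduction can be applied.

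\textbf{Step 1 (finitely generated quotient).} I would first prove that every countable acylindrically hyperbolic group $G$ admits a surjection onto a $2$-generated acylindrically hyperbolic group. To this end, fix a hyperbolically embedded free subgroup $F=\langle a,b\rangle \cong F_2$ in $G$ (which exists for every countable acylindrically hyperbolic group by the results of Dahmani--Guirardel--Osin). Enumerate $G=\{g_n\}_{n\in\N}$ and, for each $n$, choose a word $w_n$ in $F$ of sufficiently large length so that the relators $\{g_n w_n^{-1}\}_{n\in\N}$ satisfy a suitable small cancellation condition relative to the hyperbolic embedding of $F$. Then the quotient $\bar G := G/\ll g_n w_n^{-1} : n\in\N\rr$ is acylindrically hyperbolic, $F$ remains hyperbolically embedded in $\bar G$, and each $g_n$ becomes equal to $w_n(a,b) \in F$; in particular $\bar G$ is generated by $\{a,b\}$.

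\textbf{Step 2 (assembling the family).} Given $\{G_i\}_{i\in\N}$ a countable family of countable acylindrically hyperbolic groups, choose for each $i$ a hyperbolically embedded copy $F_i=\langle a_i,b_i\rangle$ of $F_2$ inside $G_i$. Form the amalgamated free product $\tilde G := *_F G_i$ in which all $F_i$ are identified with a single $F=\langle a,b\rangle$. Using known stability results for amalgams along hyperbolically embedded subgroups, $F$ remains hyperbolically embedded in the countable group $\tilde G$, so $\tilde G$ is itself acylindrically hyperbolic and Step~1 applies. Let $Q$ be the resulting $2$-generated acylindrically hyperbolic quotient of $\tilde G$. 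Since the generators of $Q$ are the images of $a,b \in F_i$, the composition $G_i \hookrightarrow \tilde G \twoheadrightarrow Q$ is surjective for every $i$, yielding the desired common quotient.

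\textbf{Main obstacle.} The principal technical difficulty lies in Step~1: an arbitrary direct limit, or countably-infinite small cancellation quotient, of acylindrically hyperbolic groups is \emph{not} automatically acylindrically hyperbolic, so one cannot simply iterate the finite-relator Dehn filling of Dahmani--Guirardel--Osin and pass to a limit. Instead, the whole family $\{g_n w_n^{-1}\}_{n\in\N}$ must be added in a single quotient step while arranging that a uniform small cancellation condition holds throughout; showing that the resulting quotient is still acylindrically hyperbolic and that $F$ remains hyperbolically embedded in it is the heart of the argument. A secondary point is the amalgamation step, which requires verifying that an infinite amalgam of acylindrically hyperbolic groups along a common hyperbolically embedded $F_2$ still has $F_2$ hyperbolically embedded.
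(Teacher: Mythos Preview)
Your two-step outline is close in spirit to the paper's argument, but Step~2 hides the essential difficulty, which you have labelled ``secondary'' when in fact it is the heart of the matter.

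The gap is the claim that $F$ remains hyperbolically embedded in the infinite amalgam $\tilde G = \ast_F G_i$. Stability results for amalgams along hyperbolically embedded subgroups are available for \emph{finitely} many factors, but for an infinite family they require a uniformity hypothesis that you have not arranged. Concretely: if $F_i\h(G_i,X_i)$ and the Cayley graph $\Gamma(G_i,X_i\sqcup F_i)$ is $\delta_i$-hyperbolic, then $\Gamma(\tilde G,(\bigcup_i X_i)\sqcup F)$ is a tree of copies of these graphs glued along diameter-$1$ cosets of $F$; such a space is hyperbolic only if $\sup_i \delta_i<\infty$. Nothing in the statement ``$F_i\h G_i$'' bounds $\delta_i$, and for a general countable family there is no a priori reason the constants should be bounded. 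Without this, $\tilde G$ need not be acylindrically hyperbolic and Step~1 cannot be invoked.

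This is precisely the point the paper isolates as its ``key novelty'' (Section~\ref{sec:uniform-hyp}). The paper does \emph{not} amalgamate; it takes the ordinary free product $G=\ast_i G_i$ and first proves (Lemma~\ref{ab}) that in \emph{every} acylindrically hyperbolic group with trivial finite radical one can find infinite cyclic $A,B$ and a relative generating set $X$ so that $\{A,B\}\h(G,X)$ with the hyperbolicity constant of $\Gamma(G,X\sqcup A\sqcup B)$ bounded by a \emph{universal} constant $D$, independent of the group. This is obtained by a rescaling trick (Lemmas~\ref{lem:exp-hyp}--\ref{lem:qc_in_exp}) together with the Kapovich--Rafi criterion (Lemma~\ref{KR}). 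With uniformity in hand, Lemma~\ref{Gi} shows the whole collection $\{A_k,B_k\}_k$ is hyperbolically embedded in the free product, and then a single application of the infinite small-cancellation Proposition~\ref{SCQ} (your Step~1, essentially) produces the common quotient. So your plan becomes correct once you insert a uniform-hyperbolicity lemma of this type; without it, Step~2 does not go through.
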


We note that neither of the countability assumptions in Theorem \ref{main} can be dropped, see Remark \ref{countrem}.

All corollaries below are obtained by applying Theorem \ref{main} to the family of all non-elementary hyperbolic groups. In particular, all properties mentioned in Corollaries \ref{cor1}--\ref{cor3} can be realized by the same group. Constructing interesting examples as common quotients of countable families of hyperbolic and relatively hyperbolic groups is not a new idea. For instance, it was used in \cite{Osi02} to obtain groups with property (T) whose left regular representation is not uniformly isolated from the trivial representation, and in \cite{Aetal} to obtain groups with strong fixed point properties (see below). Our main contribution is that such a quotient can be made acylindrically hyperbolic.

Recall that a group $G$ is said to have \emph{property $FL^p$} if every affine isometric action of $G$ on an $L^p$-space has a fixed point.
Known examples of groups having property $FL^p$ for all $p\in [1, +\infty)$ include higher rank lattices \cite{BFGM} and
certain Gromov's Monsters \cite{NS}; none of these examples admit non-elementary actions on hyperbolic spaces \cite{GST,Hae}. On the other hand, Yu \cite{Y} proved that every hyperbolic group admits a proper affine action on an $\ell^p$-space for large enough $p$, which is a strong negation of the property $FL^p$. A generalization of Yu's result to relatively hyperbolic groups has been recently obtained by Chatterji and Dahmani in \cite{CD}. It is also known that every acylindrically hyperbolic group $G$ admits an unbounded quasi-cocycle $G\to \ell^p(G)$ for all $p\in [1, +\infty)$ (see \cite{H,HO}), which can be seen as a violation of the ``quasified'' version of $FL^p$.

Motivated by these results, Gruber, Sisto and Tessera \cite{GST} asked whether there exists a group $G$ acting non-elementarily on a hyperbolic space and such that $G$ has property $FL^p$ for all $p\in [1, +\infty)$. The following corollary answers this question affirmatively.

\begin{cor}\label{cor1}
There exists a finitely generated acylindrically hyperbolic group $Q$ such that:
\begin{enumerate}
\item[(a)] $Q$ has property $FL^p$ for all $p\in [1, +\infty)$;
\item[(b)] every action of $Q$ on a contractible Hausdorff topological space of finite covering dimension has a fixed point;
\item[(c)] every simplicial action of $Q$ on a finite dimensional locally finite contractible simplicial complex $\mathcal C$ is trivial, i.e.,
it fixes the whole of $\mathcal C$ pointwise.
\end{enumerate}
\end{cor}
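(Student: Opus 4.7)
The strategy, as flagged in the introduction, is to apply Theorem \ref{main} to the family $\mathcal{F}$ consisting of all isomorphism classes of non-elementary hyperbolic groups. Since every hyperbolic group is finitely presented, $\mathcal{F}$ is countable, and each of its members is an acylindrically hyperbolic group. Theorem \ref{main} then yields a finitely generated acylindrically hyperbolic group $Q$ which is a quotient of every $H\in\mathcal{F}$. It will suffice to show that $Q$ inherits each of the three properties from at least one member of $\mathcal{F}$.

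The inheritance mechanism is uniform across (a)--(c): if $\pi\colon H\twoheadrightarrow Q$ is a surjection and $Q$ acts on a space $X$ (by affine $L^p$-isometries for (a), by homeomorphisms of a finite-dimensional contractible Hausdorff space for (b), or simplicially on a finite-dimensional locally finite contractible simplicial complex for (c)), then composition with $\pi$ turns the $Q$-action into an $H$-action with the same orbits; any point fixed by $H$ is then fixed by $Q$ by surjectivity of $\pi$. For (c) one observes additionally that triviality of the $H$-action (fixing every vertex) forces triviality of the $Q$-action on the full complex, again by surjectivity.

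For property (a), I would invoke the known existence of non-elementary hyperbolic groups with property $FL^p$ for each individual $p\in [1,+\infty)$: classically, cocompact lattices in $\mathrm{Sp}(n,1)$ are non-elementary hyperbolic and have $FL^p$ for all $p$ up to a threshold which tends to $+\infty$ with $n$. Since $\mathcal{F}$ contains such a lattice for every $n$, the inheritance step forces $Q$ to enjoy $FL^p$ for every real $p$. For (b) and (c), the relevant input is the construction of \cite{Aetal}, which produces non-elementary hyperbolic groups engineered to satisfy exactly the fixed-point properties appearing in (b) and (c); these groups lie in $\mathcal{F}$, so $Q$ inherits (b) and (c) as well.

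The substantive content has really been done in Theorem \ref{main}; the corollary is essentially bookkeeping that combines the theorem with standard quotient-inheritance of fixed-point properties and known existence results for hyperbolic groups with the required behaviour. The only mildly delicate point is verifying (a) for \emph{every} real $p$, which reduces to citing the literature on $L^p$-fixed point properties of rank one cocompact lattices to ensure that the $FL^p$ thresholds exhaust $[1,+\infty)$.
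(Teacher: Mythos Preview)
Your overall strategy---apply Theorem \ref{main} to the countable family of non-elementary hyperbolic groups and then pull back fixed-point properties along the resulting epimorphisms---is exactly the paper's approach, and your treatment of (b) matches the paper's (one hyperbolic group $G_n$ per dimension $n$, from \cite{Aetal}).

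However, your argument for (c) has a genuine gap. You assert that \cite{Aetal} supplies non-elementary hyperbolic groups satisfying property (c), so that $Q$ inherits (c) by your pull-back mechanism. But no non-elementary hyperbolic group can have property (c): by Rips' theorem every such group acts properly and cocompactly---hence nontrivially---on its Rips complex, which is a finite-dimensional, locally finite, contractible simplicial complex. So (c) cannot be inherited from any single member of $\mathcal F$, and the inheritance step you describe (``triviality of the $H$-action forces triviality of the $Q$-action'') never gets off the ground. The paper instead derives (c) from (b) together with an additional ingredient: since $Q$ is a quotient of $Alt(n)\ast Alt(n)$ for every $n\ge 5$, it has no proper subgroups of finite index; then, given a simplicial action of $Q$ on such a complex $\mathcal C$, part (b) provides a global fixed point $p$, and local finiteness forces each $Q$-orbit of simplices near $p$ to be finite, hence trivial, so the action is trivial.

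A smaller point on (a): the paper does not invoke $\mathrm{Sp}(n,1)$ lattices but rather random hyperbolic groups, citing \cite{DM,NS} for the fact that for each $n$ there is a non-elementary hyperbolic group with $FL^p$ for all $p\in[1,n]$. Your proposed route via quaternionic lattices would need a precise reference establishing that their $FL^p$ threshold tends to infinity with $n$; this is less standard than you suggest, whereas the random-group input is explicit in the cited literature.
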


The second and the third claims of the corollary strengthen the main results of \cite{Aetal}, where the  first examples of finitely generated groups that do not admit any fixed point-free actions on finite dimensional contractible Hausdorff topological spaces were constructed. These strong fixed point properties for the acylindrically hyperbolic group $G$ can be contrasted with a theorem of Rips, stating that any hyperbolic group acts properly and cocompactly on a contractible finite dimensional simplicial complex (\cite[Ch. 4, Theorem 1]{G-dlH}).

The same construction produces groups with interesting ``non-uniform'' behaviour. Recall that there are two potentially non-equivalent ways to define uniform non-amenability of a finitely generated group. The first one was suggested  by Shalom in \cite{Sh} and uses the Kazhdan constants of the left regular representations. More precisely, one says that \emph{the left regular representation of a finitely generated group $G$ is uniformly isolated from the trivial representation } if there exists $\e>0$ such that for every finite generating set $X$ of $G$ and every unit vector $v\in \ell^2(G)$, there exists $x\in X$ such that
$$
\| \lambda (x)v-v\|\ge \e,
$$
where $\lambda$ denotes the left regular representation of the group $G$. Another possibility, considered in \cite{A}, is to control the
F\o lner constants: one says that $G$ is \emph{uniformly non-amenable} if there exists $\e>0$ such that for every finite generating set $X$ of $G$, we have
$\mbox{F\o l} (G,X)\ge \e$, where
\begin{equation}\label{Fol}
\mbox{F\o l} (G,X)=\inf \frac{| \partial_X A|}{|A|}.
\end{equation}
Here the infimum is taken over all finite subsets $A\subseteq G$, and
$$
\partial _XA=\{ a\in A \mid ax\notin A\; {\rm for\; some\; } x\in X^{\pm 1}\} .
$$
It is not difficult to show that if the left regular representation of a finitely generated group $G$ is uniformly isolated from the trivial representation, then $G$ is uniformly non-amenable \cite{A}. To the best of our knowledge, it is still an open problem whether the converse is true.

It was proved in \cite{Osi02} that the left regular representations of certain Baumslag-Solitar groups are not uniformly isolated from the trivial representations. These groups were also shown to be not uniformly non-amenable in \cite{A}. On the other hand, it follows from a result of Koubi \cite{Kou} that every non-elementary hyperbolic group is uniformly non-amenable. This is also true for non-elementary relatively hyperbolic groups \cite{Xie}, mapping class groups of closed surfaces of genus $g\ge 1$ and $Out(F_n)$ for $n\ge 2$ \cite{AAS}. Nevertheless, we have the following.

\begin{cor}\label{cor2}
There exists a finitely generated acylindrically hyperbolic group which is not uniformly non-amenable. In particular, its left regular representation is not uniformly isolated from the trivial representation.
\end{cor}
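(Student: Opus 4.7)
\noindent\textbf{Proof plan for Corollary~\ref{cor2}.} The plan is to apply Theorem~\ref{main} to a countable family of hyperbolic groups consisting of all (isomorphism classes of) non-elementary hyperbolic groups together with a carefully chosen sequence of auxiliary hyperbolic groups $\{H_n\}_{n \ge 1}$ designed to produce small F\o lner constants in the quotient. The theorem yields a finitely generated acylindrically hyperbolic common quotient $Q$, and for every $n$ we obtain a surjection $\phi_n : H_n \twoheadrightarrow Q$. The second conclusion of the corollary is then immediate from the first, via the implication (uniform isolation of the left regular representation from the trivial representation) $\Rightarrow$ (uniform non-amenability) recalled in the discussion preceding the statement.

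To refute uniform non-amenability of $Q$, I would exhibit, for every $\varepsilon > 0$, a finite generating set $X_\varepsilon$ of $Q$ with $\mathrm{F\o l}(Q, X_\varepsilon) < \varepsilon$. A natural choice for the auxiliary groups is $H_n := G_0 \ast K_n$, where $G_0$ is a fixed non-elementary hyperbolic group (e.g.\ $G_0 = F_2$ with generating set $S_0$) and $K_n$ is a finite group of order at least $n$. Set $Y_n := S_0 \sqcup (K_n \setminus \{1\})$, a finite generating set of $H_n$; the candidate generating set of $Q$ is $X_n := \phi_n(Y_n)$, which contains the finite subgroup $\bar K_n := \phi_n(K_n)$. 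The F\o lner candidate is a `thickened coset' $A_n := B_R \cdot \bar K_n$, where $B_R$ is a small finite subset of $Q$ (say a ball of radius $R$ with respect to $\phi_n(S_0)$ inside the subgroup it generates): right-multiplication by any element of $\bar K_n$ preserves $A_n$ exactly and contributes nothing to the boundary, while right-multiplication by a $\phi_n(S_0)$-generator moves only those elements of $A_n$ whose $B_R$-prefix lies near the boundary of $B_R$. Combining these estimates should, in the ideal world, yield $|\partial_{X_n} A_n|/|A_n| \to 0$ as $|\bar K_n|$ is taken much larger than $|B_R|$.

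The principal obstacle is that F\o lner constants are \emph{not} monotonic under quotients: a direct computation in the free product $H_n$ shows that the naive coset $\bar K_n$ alone is not F\o lner (right-multiplication by an $S_0$-generator pushes all of $\bar K_n$ outside itself), and one cannot simply transfer estimates between $H_n$ and $Q$. The estimate must be verified intrinsically in $Q$: one needs to know that the product $B_R \cdot \bar K_n$ essentially behaves as an external product of size $|B_R|\cdot|\bar K_n|$, that the relations in $Q$ do not collapse it, and that the action of $\phi_n(S_0)$ on $\bar K_n$-cosets is sufficiently tame to control the boundary. Ensuring all of this requires a delicate arrangement of the construction behind Theorem~\ref{main}: in particular, one must be able to preserve the chosen finite subgroup $K_n \le H_n$ under the Dehn-filling/small-cancellation quotients that build $Q$, so that $\phi_n|_{K_n}$ is injective and the cosets of $\bar K_n$ retain their geometric structure. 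This step --- coupling the common-quotient machinery with the preservation of a prescribed subgroup structure --- is the technical heart of the argument; a possible alternative route is to arrange instead for $Q$ to contain an infinite amenable subgroup of the type found in~\cite{Osi02} (e.g.\ Baumslag--Solitar-like), whose generating sets are already known to yield small F\o lner constants.
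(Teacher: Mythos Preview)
Your ``principal obstacle'' rests on a false premise: F\o lner constants \emph{are} monotone under epimorphisms in the relevant direction. If $\pi\colon G\twoheadrightarrow Q$ is surjective and $Y=\pi(X)$, then $\mbox{F\o l}(Q,Y)\le \mbox{F\o l}(G,X)$; this is \cite[Theorem~4.1]{A} and is precisely the tool the paper invokes. Consequently nothing needs to be verified intrinsically in $Q$, and no subgroup structure needs to be preserved through the small-cancellation construction. It suffices to exhibit non-elementary hyperbolic groups $(H_i,X_i)$ with $\mbox{F\o l}(H_i,X_i)\to 0$ and push the generating sets forward. The paper does exactly this: take a chain $N_1\lhd N_2\lhd\cdots$ in $F=F_2$ with each $H_i=F/N_i$ non-elementary hyperbolic and $F/\bigcup_i N_i\cong\mathbb Z_2\wre\mathbb Z$ amenable (such a chain is recorded in \cite{Osi02b}); then $\mbox{F\o l}(H_i,X_i)\to 0$ by \cite[Corollary~12.3]{A}, and monotonicity gives $\mbox{F\o l}(Q,Y_i)\to 0$ in one line.

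Your auxiliary family $H_n=F_2\ast K_n$ with $Y_n=S_0\sqcup(K_n\setminus\{1\})$ cannot play this role. The retraction $F_2\ast K_n\twoheadrightarrow F_2$ killing $K_n$ sends $Y_n$ to $S_0$, so the very monotonicity you denied yields
\[
\mbox{F\o l}(F_2\ast K_n,\,Y_n)\ \ge\ \mbox{F\o l}(F_2,S_0)\ >\ 0
\]
uniformly in $n$. Thus these groups carry no small F\o lner sets to push down to $Q$, and the elaborate plan to control cosets of $\bar K_n$ inside $Q$ is attacking a non-problem with the wrong family. The alternative you mention at the end---arranging for $Q$ to contain a specific amenable subgroup---is closer in spirit, but still unnecessary once one uses the correct monotonicity together with the tower of hyperbolic approximants to an amenable group.
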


The first step in the proofs of uniform non-amenability of the groups considered in \cite{AAS,Kou,Xie} consists of showing that there exists a constant $C$ such that for any finite generating set $X$ of $G$, one can find an element $g\in G$, of word length $|g|_X\le C$, which is loxodromic with respect to a certain action of $G$ on a hyperbolic space. In particular, such an element $g$ has infinite order.
Our last corollary shows that even this property may fail to be true in a general acylindrically hyperbolic group.

\begin{cor}\label{cor3}
There exists an acylindrically hyperbolic group $Q$ with the following property. For every $r\in \mathbb N$ and every sufficiently large $n\in \mathbb N$, there is a finite generating set $X$ of $Q$ such that all elements $g\in Q$ of length $|g|_X\le r$ have order at most $n$.
\end{cor}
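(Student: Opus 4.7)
The plan is to apply Theorem~\ref{main} to a countable family of non-elementary hyperbolic groups in which the desired torsion on balls already holds, and then to transport the generating sets to the common quotient. Fix $r\in\mathbb{N}$, put $F_2=F(a,b)$, and write $W_r\subseteq F_2$ for the $r$-ball with respect to the free generating set $\{a,b\}$. For each $n\in\mathbb{N}$ set
$$
H_{r,n} \;=\; F_2 \,/\, \langle\langle\, w^{n} : w\in W_r \,\rangle\rangle.
$$
A classical theorem of Ol'shanskii and Delzant on imposing finitely many power relations on a non-elementary torsion-free hyperbolic group supplies a threshold $n_0(r)\in\mathbb{N}$ such that for every $n\ge n_0(r)$ the group $H_{r,n}$ is non-elementary hyperbolic. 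By construction, every element of $H_{r,n}$ of length at most $r$ in the induced generators is an image of some $w\in W_r$, and therefore has order dividing $n$.

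Next, apply Theorem~\ref{main} to the countable family $\{H_{r,n}:r\in\mathbb{N},\,n\ge n_0(r)\}$ (or, as the introduction suggests, to the family of all non-elementary hyperbolic groups, which contains it) to obtain a finitely generated acylindrically hyperbolic group $Q$ together with surjective homomorphisms $\pi_{r,n}\colon H_{r,n}\twoheadrightarrow Q$. For each pair $(r,n)$ in the family, let $X_{r,n}\subseteq Q$ be the set of nontrivial $\pi_{r,n}$-images of the generators of $H_{r,n}$; then $X_{r,n}$ is a finite generating set of $Q$. Any $q\in Q$ with $|q|_{X_{r,n}}\le r$ is the $\pi_{r,n}$-image of an element of length at most $r$ in $H_{r,n}$, which has order dividing $n$, so $q^n=1$ and $q$ has order at most $n$, as required.

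The only nontrivial step beyond Theorem~\ref{main} is the hyperbolicity of the auxiliary groups $H_{r,n}$ in the first paragraph; this is therefore the main obstacle. It is exactly the content of the standard Ol'shanskii--Delzant theorem that for a non-elementary torsion-free hyperbolic group $G$ and a finite subset $S\subseteq G$, the quotient $G/\langle\langle s^n:s\in S\rangle\rangle$ is non-elementary hyperbolic once $n$ is sufficiently large. Once this is granted, the remainder of the argument is a formal diagram chase through $\pi_{r,n}$: any quotient of $H_{r,n}$ inherits the torsion condition on the $r$-ball of the image of the generating set.
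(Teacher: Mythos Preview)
Your argument is correct and follows the same overall template as the paper: construct, for each pair $(r,n)$ in the admissible range, a non-elementary hyperbolic group in which every element of length at most $r$ has order at most $n$; apply Theorem~\ref{main} to obtain a common acylindrically hyperbolic quotient $Q$; then push the generating sets down to $Q$ via the surjections. The transport step is identical to the paper's.

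The genuine difference is in how the auxiliary hyperbolic groups are produced. You take $H_{r,n}=F_2/\ll w^{n}:w\in W_r\rr$ and invoke the Ol'shanskii--Delzant theorem on adjoining finitely many high-power relations. The paper instead appeals to the solution of the Burnside problem (Novikov--Adyan, Ivanov, Lys\"enok) together with the isoperimetric characterisation of hyperbolicity, and to Ivanov--Ol'shanskii, to obtain hyperbolic ``partial Burnside'' groups approximating $B(2,n)$. Your route is more elementary, but the threshold it yields is $n_0=n_0(r)$, depending on $r$, because the Ol'shanskii--Delzant bound grows with the finite set $W_r$. The paper's construction gives a \emph{uniform} threshold: once $n$ is large enough for the Burnside machinery to apply, the same $n$ works for every $r$. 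This matches the stronger quantifier order stated in part~(e) of the lemma in Section~\ref{sec:proofs} (``for every sufficiently large $n$ and all $r$''). Both readings are consistent with the corollary as phrased, so your proof establishes the statement as written, while the paper's argument delivers slightly more.
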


It is worth noting that we do not know whether every finitely generated acylindrically hyperbolic group has uniform exponential growth.
For the definition of uniform exponential growth and a survey of known results we refer to \cite{Mann}.

The paper is organized as follows. In the next section we collect necessary definitions and results about hyperbolically embedded subgroups. Section 3 is devoted to the small cancellation component of our proof. Its main result, Proposition \ref{SCQ}, strengthens the work of  \cite{Hull} and seems to be of independent interest. To apply Proposition \ref{SCQ} in our settings we show that every acylindrically hyperbolic group contains infinite proper hyperbolically embedded subgroups with universal associated hyperbolicity constant. This is the key novelty of our paper, which is discussed in Section 4. The proofs of Theorem \ref{main} and Corollaries~\ref{cor1}--\ref{cor3} are contained in Section 5.

\subsection*{Acknowledgements} The authors thank the anonymous referee for a careful reading of the paper and their comments.

\section{Preliminaries}\label{sec:prelims}

In this section we recall the definition of a hyperbolically embedded collection of subgroups. This notion was introduced in \cite{DGO} and plays a crucial role in our paper.

In what follows, we will use Cayley graphs of groups with respect to generating alphabets which are not necessarily subsets of the group. More precisely, by a \emph{generating alphabet} of a group $G$ we mean an abstract set  $\mathcal A$ together with a possibly non-injective map $\alpha\colon \mathcal A\to G$ such that $G$ is generated by $\alpha(\mathcal A)$ in the usual sense.  Given a word $a_1^{\epsilon_1}\ldots a_k^{\epsilon_k}$,
where $a_1,\dots,a_k \in \mathcal{A}$ and $\epsilon_1,\dots,\epsilon_k \in \{1,-1\}$,
we say that it \emph{represents} an element $g\in G$ if $g=\alpha(a_1)^{\epsilon_1}\cdots \alpha(a_k)^{\epsilon_k}$ in $G$.

By the \emph{Cayley graph} of $G$ with respect to a generating alphabet $\mathcal A$, denoted $\Gamma (G, \mathcal A)$, we mean a graph with the vertex set $G$ and the set of edges defined as follows. For every $a\in \mathcal A$ and every $g\in G$, there is an oriented edge $e_{g,a}=(g, g\alpha(a))$ in $\Gamma (G, \mathcal A)$ labelled by $a$. Note that $\Gamma (G, \mathcal A)$ may have multiple edges if (and only if) $\alpha $ is not injective. By $d_{\mathcal A}$ (respectively, $|\cdot|_{\mathcal A}$) we denote the standard edge-path metric on
$\Gamma (G,\mathcal A)$ (respectively, the word length on $G$ with respect to the generating set $\alpha (A)$).

By abuse of notation, we often identify words in the alphabet $\mathcal A$ and the elements of $G$ represented by them. For two words $U$ and $V$, we write $U \equiv V$ to denote the letter-by-letter equality between them, and $U=_G V$ if these words represent the same element in the group $G$. For a word $W$, $\|W\|$ denotes its length.
If $p$ is a simplicial path in the Cayley graph $\Gamma (G, \mathcal A)$, $p_-$ and $p_+$ will  denote the initial and the terminal vertices of $p$ respectively, and $l(p)$
will denote the length of this path.
The \emph{label} of $p$, denoted by $\Lab(p)$, is the word  obtained by reading off the labels of its oriented edges from $p_-$ to $p_+$.

Suppose that we have a group $G$, a collection of subgroups $\Hl$ of $G$, and a subset $X\subseteq G$ such that $X$ and the union of all $H_\lambda$ together generate $G$. In this case we say that $X$ is a \emph{relative generating set}
of $G$ with respect to $\Hl$. We think of $X$ and subgroups $H_\lambda$ as abstract sets and consider the disjoint unions
\begin{equation}\label{calH}
\mathcal H = \bigsqcup\limits_{\lambda\in \Lambda} H_\lambda\;\;\;  {\rm and}\;\;\; \mathcal A= X \sqcup \mathcal H.
\end{equation}
Obviously $\mathcal A$ is a generating alphabet of $G$, where the map $\alpha \colon \mathcal A\to G$ is induced by the natural inclusions $X\to G$ and $H_\lambda\to G$. Note that $\alpha $ may not be injective.

\begin{conv}
Henceforth we always assume that all generating sets and relative generating sets are symmetric. That is, if $x\in X$, then $x^{-1}\in X$. In particular, every element of $G$ can be represented by a (positive) word in  $\mathcal A$. Given a word $W\equiv a_1\ldots a_k$ in $\mathcal A$, we denote by $W^{-1}$ the word $a_k^{-1}\ldots a_1^{-1}$, where $a_i^{-1}$ are letters of $\mathcal A$: $a_i^{-1}\in X$ whenever $a_i\in X$ and $a_i\in H_\lambda$ whenever $a_i\in H_\lambda$.
\end{conv}

In these settings, we consider the Cayley graphs $\G $ and $\Gamma (H_\lambda, H_\lambda)$, $\lambda\in \Lambda$, and naturally think of the latter as subgraphs of the former. For each $\lambda \in \Lambda $, we introduce an extended metric $\dl \colon H_\lambda \times H_\lambda \to [0, +\infty]$ as follows.

\begin{defn} \label{def:d_hat}
For every $g,h\in H_\lambda $, let $\dl (g,h)$ denote the length of a shortest path in $\G $ that connects the elements $g, h\in H_\lambda $ and contains no edges of $\Gamma (H_\lambda, H_\lambda)$. If no such a path exists, we set $\dl (h,k)=\infty $.
\end{defn}

Clearly $\dl $ satisfies the triangle inequality, where the addition is extended to $[0, +\infty]$ in the natural way.

\begin{defn}\label{hedefn}
A collection of subgroups $\Hl$ of $G$ is \emph{hyperbolically embedded  in $G$ with respect to a subset $X\subseteq G$}, denoted $\Hl \h (G,X)$, if the following conditions hold.
\begin{enumerate}
\item[(a)] The group $G$ is generated by the alphabet $\mathcal A$ defined in (\ref{calH}) and the Cayley graph $\G $ is hyperbolic.
\item[(b)] For every $n\in \mathbb N$ and every $\lambda\in \Lambda$, the ball $\{ h\in H_\lambda\mid \dl(1,h)\le n\}$  contains finitely many elements.
\end{enumerate}
We say  that $\Hl$ is \emph{hyperbolically embedded} in $G$ and write $\Hl\h G$ if $\Hl\h (G,X)$ for some $X\subseteq G$.
\end{defn}

For details and further information on hyperbolically embedded subgroups we refer to \cite{DGO}.
We will need the following result, which is a simplification of \cite[Theorem 1.2]{Osi16}.

\begin{thm}\label{heah}
A group $G$ is acylindrically hyperbolic if and only if it contains a proper infinite hyperbolically embedded subgroup.
\end{thm}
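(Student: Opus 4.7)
I would prove the two implications separately.

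For the forward direction, assume $G$ is acylindrically hyperbolic, so $G$ admits a non-elementary acylindrical action on some hyperbolic space. Since $G$ is not virtually cyclic and has unbounded orbits, Gromov's classification of isometric actions yields a loxodromic element $g \in G$, and acylindricity of the action immediately gives the WPD condition for $g$. I would then consider the elementary closure $E(g) = \{h \in G : hg^n h^{-1} = g^{\pm n} \text{ for some } n > 0\}$, which is virtually cyclic; a standard application of results from \cite{DGO} shows that $E(g) \h G$. This subgroup is infinite (it contains the infinite-order element $g$) and proper (since $G$ is not virtually cyclic).

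For the converse, suppose $H \h (G, X)$ with $H$ proper and infinite, and set $\mathcal{A} = X \sqcup H$. By definition $\G$ is hyperbolic, and the left action of $G$ on it is isometric. My first step would be to verify that this action has unbounded orbits: if the diameter of $\G$ were bounded by some $D$, then choosing any $g_0 \in G \setminus H$ and rerouting each single $H$-labelled edge through $g_0$ would bound $\hd(1,h)$ uniformly in $h \in H$, contradicting the properness condition (b) of hyperbolic embedding since $H$ is infinite. A variant of this same argument rules out $G$ being virtually cyclic (an infinite virtually cyclic group cannot contain a proper infinite hyperbolically embedded subgroup).

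The main obstacle is to produce a loxodromic WPD element for the action on $\G$. If $H$ contains an infinite-order element $h$, I would argue that $h$ acts loxodromically on $\G$ (its powers escape to infinity by condition (b)) and that the WPD condition follows from condition (b) via a bottleneck argument: any element which moves two far-apart points of the axis of $h$ a bounded amount must lie in a bounded $\hd$-ball, and there are only finitely many such elements. When $H$ is entirely torsion, I would first enlarge $H$ to a larger hyperbolically embedded subgroup containing an infinite-order element, using the small cancellation constructions of \cite{DGO}. Once a loxodromic WPD element is exhibited, the characterization of acylindrical hyperbolicity via WPD elements (the main theorem of \cite{Osi16}) yields that $G$ is acylindrically hyperbolic, and the principal technical content of the theorem lies precisely in this last step.
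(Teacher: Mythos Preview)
The paper does not prove this theorem; it merely records it as a simplification of \cite[Theorem 1.2]{Osi16}. So there is no in-paper proof to compare against. That said, your sketch of the converse direction contains a genuine error.

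You claim that if $h\in H$ has infinite order then $h$ acts loxodromically on $\G$, with the justification that ``its powers escape to infinity by condition (b)''. This is false. Condition (b) concerns the relative metric $\hd$, not the word metric $d_{\mathcal A}$; in the Cayley graph $\G$ every nontrivial element of $H$ is a single letter of the alphabet $\mathcal A=X\sqcup H$, so $|h^n|_{\mathcal A}\le 1$ for all $n$. Thus the orbit of $1$ under $\langle h\rangle$ is bounded in $\G$, and $h$ is certainly not loxodromic there. The same issue undermines your WPD argument, which is phrased in terms of the axis of $h$ in $\G$.

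The actual passage from ``$H$ is a proper infinite hyperbolically embedded subgroup'' to acylindrical hyperbolicity (as carried out in \cite{Osi16}) does not proceed by finding a loxodromic inside $H$ for the action on $\G$. One route is to show that the $G$-action on $\G$ can be promoted to an acylindrical action on a (possibly different) hyperbolic space; another is to produce a loxodromic WPD element of $G$ that lies \emph{outside} $H$, using words that alternate letters from $H$ with letters from $X$ so that their powers genuinely have growing $\mathcal A$-length. Either way, the mechanism is substantially different from what you wrote, and you would need to replace the core of your converse argument.
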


\section{Small cancellation quotients of groups with hyperbolically embedded subgroups}
The proof of Theorem \ref{main} makes use of small cancellation theory over acylindrically hyperbolic groups. The idea of generalizing classical small cancellation to groups acting on hyperbolic spaces is due to Gromov \cite{Gro}.  For hyperbolic groups it was elaborated by Olshanskii in \cite{Ols93}. This approach was generalized to relatively hyperbolic groups by the second author in \cite{Osi10}, and further extended to acylindrically hyperbolic groups by Hull \cite{Hull}.
We begin by recalling necessary definitions.

Let $G$ be a group generated by an alphabet
$\mathcal A$, and let $\mathcal R$ be a symmetrized set of words over $\mathcal A$; that is, we assume that for every $R\in \mathcal R$, $\mathcal R$ contains all cyclic permutations of $R^{\pm 1}$.

Recall that a path $p$ in a metric space $(S,\d)$ is said to be {\it $(\lambda , c)$-quasi-geodesic} for some $\lambda > 0$, $c\ge 0$, if
$$
\d(q_-, q_+)\ge \lambda l(q)-c
$$
for any subpath $q$ of $p$. Further, a word $R$ in $\mathcal A$ is {\it $(\lambda, c)$-quasi-geodesic}, if some (equivalently, any) path labelled by $R$ in $\Gamma (G, \mathcal A)$ is $(\lambda, c)$-quasi-geodesic.

\begin{defn}\label{SCC}
The set $\mathcal R$ satisfies the {\it $C(\e , \mu , \lambda , c, \rho )$ small cancellation condition} (with respect to $\mathcal A$) for some $\e \ge 0$, $\mu >0$, $\lambda >0$, $c\ge 0$, $\rho >0$, if
\begin{enumerate}
\item[(a)] $\| R\| \ge \rho $ for any $R\in \mathcal R$;
\item[(b)] any word $R\in \mathcal R$ is $(\lambda,c)$-quasi-geodesic;
\item[(c)] suppose that for two words
$R, R^\prime \in \mathcal R$ we have
 $R\equiv UV$, $R^\prime \equiv U^\prime V^\prime $, $U^\prime =_G YUZ$
for some words $Y,Z$ in $\mathcal A$ such that
$$
\max \{ \| Y\| ,\| Z\| \} \le \e \quad {\rm and}\quad \max \{ \| U\| , \| U^\prime\| \} \ge \mu \| R\| ;
$$ then $YRY^{-1}=_G R^\prime $.
\end{enumerate}
\end{defn}

Recall that, by our definition, every generating alphabet $\mathcal A$ of a group $G$ comes equipped with a not necessarily injective map $\alpha\colon \mathcal A\to G$. In some cases, if $a, b\in \mathcal A$ represent the same element of $G$ (i.e., if $\alpha(a)=\alpha(b)$), we may want to replace $\mathcal A$ with another generating alphabet $\mathcal A^\prime$ of $G$, where the letters $a$ and $b$ are identified. Clearly passing from $\mathcal A$ to $\mathcal A^\prime$ does not change the word metric on $G$. To prove the main result of this section we will need the following elementary observation.

\begin{lem}\label{AB}
Let $\mathcal A_1$ and $\mathcal A_2$ be generating alphabets of a group $G$ with the corresponding maps $\alpha_i \colon \mathcal A_i\to G$, $i=1,2$. Let $\xi\colon \mathcal A_1\to \mathcal A_2$ be a surjective map such that $ \alpha_1 = \alpha_2 \circ \xi$. Suppose that $\mathcal R_{1}$ is a symmetrized set of words in the alphabet $\mathcal A_1$ satisfying the $C(\e , \mu , \lambda , c, \rho )$ small cancellation condition (with respect to $\mathcal A_1$) for some $\e \ge 0$, $\mu >0$, $\lambda >0$, $c\ge 0$, $\rho >0$. Let $\mathcal R_{2}$ be the set of words in $\mathcal A_2$ obtained from words in $\mathcal R_{1}$ by replacing each letter $a\in \mathcal A_1$ with $\xi(a)\in \mathcal A_2$. Then $\mathcal R_{2}$ satisfies the same $C(\e , \mu , \lambda , c, \rho )$ small cancellation condition (with respect to $\mathcal A_2$).
\end{lem}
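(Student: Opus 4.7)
The plan is to observe that the passage from $\mathcal A_1$ to $\mathcal A_2$ via $\xi$ changes nothing metrically relevant, so each of the three clauses of Definition~\ref{SCC} transfers formally. First I would extend $\xi$ letterwise to a map $\Xi$ from words in $\mathcal A_1$ to words in $\mathcal A_2$, defined so that it commutes with formal inversion (by setting $\xi(a^{-1}) := \xi(a)^{-1}$). The identity $\alpha_1 = \alpha_2 \circ \xi$ then implies that $\Xi(W)$ represents the same element of $G$ as $W$ for every word $W$ in $\mathcal A_1$, and clearly $\|\Xi(W)\| = \|W\|$. At the level of Cayley graphs this induces a combinatorial map $\Gamma(G, \mathcal A_1) \to \Gamma(G, \mathcal A_2)$ which is the identity on vertices and, using surjectivity of $\xi$, surjective on edges. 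Consequently $d_{\mathcal A_1}(g,h) = d_{\mathcal A_2}(g,h)$ for all $g,h\in G$, and any path labeled by $W$ in $\Gamma(G, \mathcal A_1)$ has the same endpoints and length as the path labeled by $\Xi(W)$ in $\Gamma(G, \mathcal A_2)$.

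Given these observations, verifying conditions (a) and (b) of Definition~\ref{SCC} for $\mathcal R_2$ is immediate: lengths of words in $\mathcal R_1$ are preserved by $\Xi$, so the lower bound $\rho$ survives, and quasi-geodesicity is a purely metric property preserved because both distances and path lengths agree on the two Cayley graphs.

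For the small cancellation piece condition (c), I would argue by lifting. Given $R, R^\prime \in \mathcal R_2$ with $R \equiv UV$, $R^\prime \equiv U^\prime V^\prime$, and $U^\prime =_G YUZ$ satisfying the length bounds of Definition~\ref{SCC}(c), pick $\tilde R, \tilde R^\prime \in \mathcal R_1$ with $\Xi(\tilde R) \equiv R$ and $\Xi(\tilde R^\prime) \equiv R^\prime$ and pull back the decompositions to $\tilde R \equiv \tilde U \tilde V$ and $\tilde R^\prime \equiv \tilde U^\prime \tilde V^\prime$. Using surjectivity of $\xi$, choose arbitrary preimages $\tilde Y, \tilde Z$ of $Y, Z$ under $\Xi$; length preservation gives $\|\tilde Y\|, \|\tilde Z\| \le \e$, $\|\tilde U\| = \|U\|$, $\|\tilde U^\prime\| = \|U^\prime\|$, and $\|\tilde R\| = \|R\|$, so the numerical hypotheses of (c) hold over $\mathcal A_1$. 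Because $\Xi$ preserves the group element represented by a word, $U^\prime =_G YUZ$ lifts to $\tilde U^\prime =_G \tilde Y \tilde U \tilde Z$. Applying the small cancellation condition for $\mathcal R_1$ then gives $\tilde Y \tilde R \tilde Y^{-1} =_G \tilde R^\prime$, and projecting back via $\Xi$ yields $Y R Y^{-1} =_G R^\prime$, as required.

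The lemma is essentially a bookkeeping statement, so I do not expect any genuine obstacle; the one point that needs attention is that lifting the data of (c) from $\mathcal A_2$ to $\mathcal A_1$ genuinely produces an instance of (c) over $\mathcal R_1$, and this is precisely where the surjectivity of $\xi$ and the commuting triangle $\alpha_1 = \alpha_2 \circ \xi$ are used.
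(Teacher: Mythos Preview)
Your proof is correct and follows essentially the same approach as the paper: the paper extends $\xi$ to free monoids $\xi^\ast\colon \mathcal A_1^\ast\to \mathcal A_2^\ast$ and states that surjectivity makes it ``straightforward to check'' that $\xi^\ast$ preserves quasi-geodesicity and condition~(c), whereas you carry out that check explicitly by lifting an instance of condition~(c) over $\mathcal A_2$ to one over $\mathcal A_1$. The only cosmetic difference is your explicit handling of formal inverses via $\xi(a^{-1}):=\xi(a)^{-1}$; under the paper's standing convention that generating alphabets are symmetric this is already built into the alphabet, but it does no harm.
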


\begin{proof}
We extend the map $\xi$ to free monoids $\xi^\ast \colon \mathcal A_1^\ast \to \mathcal A_2^\ast $ in the obvious way. Using surjectivity of $\xi$ it is straightforward to check that $\xi^\ast $ preserves the property of a word to be $(\lambda, c)$-quasi-geodesic as well as property (c) in Definition \ref{SCC} and the claim follows.
\end{proof}

\begin{prop} \label{SCQ} Let $G$ be a  group, $\Hl$, $\Km$ two collections of subgroups of $G$, $X$ subset of $G$. Suppose that $$\Hl\cup \Km \h(G,X).$$ Then there exists $n \in \mathbb N$ and finite subsets $\mathcal F_\lambda\subseteq H_\lambda$, $\lambda \in \Lambda$, such that the following holds.

Let $\mathcal H$, $\mathcal A$ be the alphabets defined by (\ref{calH}) and let $\mathcal W=\{ W_i\}_{i\in I}$ be any set of words in $\mathcal A$ of the form
\begin{equation}\label{W}
W_i\equiv x_i a_{i1} b_{i1}\ldots a_{in}b_{in}
\end{equation}
satisfying the following conditions for all $i \in I$:
\begin{enumerate}
\item[(a)] $x_i\in X$;
\item[(b)] there exist $\alpha=\alpha (i)$ and $\beta=\beta (i)$ in $\Lambda$ such that
$H_{\alpha} \cap {H_\beta} =\{ 1\}$
and $a_{ij}\in H_\alpha\setminus \mathcal F_\alpha$, $b_{ij}\in H_\beta\setminus \mathcal F_\beta$ for all $1\le j\le n$;
\item[(c)] if a letter $c\in \mathcal H$ occurs in $W_i$ for some $i\in I$, then it occurs only in $W_i$ and only once; in addition, $c^{-1}$ does not occur in any word from $\mathcal W$.
\end{enumerate}
Let
\begin{equation}\label{Gbar}
\overline{G}= G/\ll W_i, \, i\in I\rr.
\end{equation}
and
$$
Y=X\cup \left(\bigcup_{\lambda\in \Lambda} H_\lambda\right) \subseteq G.
$$
Then the restriction of the natural homomorphism $\gamma\colon G\to \overline{G}$ to the subset
$Y\cup \left(\bigcup_{\mu\in M} K_\mu\right) \subseteq G$ is injective and $\{ \gamma(K_\mu)\}_{\mu\in M}\h (\overline{G}, \gamma(Y)).$
\end{prop}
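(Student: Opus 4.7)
The plan is to verify that, with suitable choices of $n$ and the finite subsets $\mathcal{F}_\lambda$, the symmetric closure $\mathcal{R}$ of $\mathcal{W}$ (i.e., $\mathcal{W}$ together with all cyclic permutations of the words $W_i^{\pm 1}$) satisfies a small cancellation condition $C(\e,\mu,\lambda,c,\rho)$ with respect to $\mathcal{A}$, and then to invoke the general small cancellation theorem for hyperbolically embedded subgroups developed in \cite{Hull} (which builds on \cite{Ols93, Osi10, DGO}). That theorem gives, under a sufficiently small cancellation assumption, both injectivity of $\gamma$ on short $\mathcal{A}$-words and the preservation of hyperbolic embedding for any subcollection of subgroups disjoint from the letters occurring in the relators.

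To set up the small cancellation condition, I would first fix constants $\e,\mu,\lambda,c,\rho$ required by Hull's theorem. Using condition (b) of Definition \ref{hedefn}, I would choose $\mathcal{F}_\lambda\subseteq H_\lambda$ to contain all elements $h$ with $\hd_\lambda(1,h)\le N$, where $N=N(\e,\lambda,c)$ is a large constant; then every letter $a_{ij}$ (and $b_{ij}$) appearing in a word $W_i$ represents an element with $\hd$-length at least $N$. Choosing $n$ large enough, I would ensure $\|W_i\|=2n+1\ge \rho$. The quasi-geodesicity requirement (b) of Definition \ref{SCC} then follows from the standard theory of isolated components in $\Gamma(G,\mathcal{A})$: consecutive $\mathcal{H}$-letters in $W_i$ come from subgroups $H_{\alpha(i)},H_{\beta(i)}$ with $H_{\alpha(i)}\cap H_{\beta(i)}=\{1\}$, so successive components are separated in $\Gamma(G,\mathcal{A})$, and since each such component is $\hd$-long, any hypothetical ``shortcut'' would have to contain many $\mathcal{A}$-edges, preventing significant backtracking.

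To verify condition (c) of Definition \ref{SCC}, suppose $R, R'\in \mathcal{R}$ with $R\equiv UV$, $R'\equiv U'V'$, $U'=_G YUZ$, $\max\{\|Y\|,\|Z\|\}\le \e$ and $\max\{\|U\|,\|U'\|\}\ge \mu\|R\|$. Choosing $\mu$ small but $n$ large so that $\mu(2n+1)$ still exceeds a few letters, $U$ or $U'$ must contain at least one full $\mathcal{H}$-letter occurring in some $W_i$. By hypothesis (c) on $\mathcal{W}$, this letter occurs in exactly one word from $\mathcal{W}$ and exactly once, so $R$ and $R'$ are cyclic conjugates of $W_i^{\pm 1}$ for the \emph{same} index $i$; matching the aligned letters and using that the $\mathcal{H}$-letters determine their group positions uniquely forces $Y$ and $Z$ to represent $1$ in $G$, whence $YRY^{-1}=_G R'$.

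The final step is to apply the small cancellation theorem from \cite{Hull} to conclude that $\gamma$ is injective on single letters of $\mathcal{A}$ (hence on $Y$, and on each $K_\mu$, since no letter from $K_\mu$ occurs in any $W_i$) and that $\{\gamma(K_\mu)\}_{\mu\in M}\h (\overline{G},\gamma(Y))$; Lemma \ref{AB} may be invoked to identify letters of $\mathcal{A}$ representing the same element of $G$ if Hull's theorem is stated in terms of a more economical alphabet. The main obstacle is the uniform verification of quasi-geodesicity and the small cancellation condition: this requires the ``separation of components'' machinery of \cite{DGO} applied in the possibly non-locally-finite Cayley graph $\Gamma(G,\mathcal{A})$, and the hypothesis $H_{\alpha(i)}\cap H_{\beta(i)}=\{1\}$ is essential to prevent a short word in $\mathcal{A}$ from representing an element that lies in both subgroups simultaneously and thereby producing an unwanted overlap between cyclic shifts of the $W_i$.
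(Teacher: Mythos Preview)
Your proposal is essentially correct and follows the same route as the paper: choose $\mathcal{F}_\lambda$ to be a $\hd_\lambda$-ball of suitable radius, verify (via the isolated-components machinery of \cite{DGO,Hull}) that the symmetric closure $\mathcal{R}$ of $\mathcal{W}$ satisfies a $C(\e,\mu,\lambda,c,\rho)$ condition for $n$ large, then invoke Hull's small cancellation theorem (\cite[Lemma~4.4]{Hull}) for the collection $\Km$.

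One point deserves more care than you give it. You write that Lemma~\ref{AB} ``may be invoked \ldots\ if Hull's theorem is stated in terms of a more economical alphabet,'' as if this were optional. In fact the alphabet change is the crux of the argument. Hull's Lemma~4.4 applies to a collection hyperbolically embedded with respect to a given relative generating set, and it requires the relator set to be \emph{strongly bounded}: only finitely many letters from each subgroup in the collection may occur. If you work with the original data $\Hl\cup\Km\h(G,X)$ and the alphabet $X\sqcup\mathcal{H}\sqcup\mathcal{K}$, then (since $I$ may be infinite) the words $W_i$ can use infinitely many letters from $\mathcal{H}$, and $\mathcal{R}$ is \emph{not} strongly bounded; Hull's lemma does not apply directly. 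The paper's fix is to pass to the alphabet $Y\sqcup\mathcal{K}$ with $Y=X\cup\bigcup_\lambda H_\lambda$, note that $\Km\h(G,Y)$ tautologically, transport the small cancellation condition to this new alphabet via Lemma~\ref{AB}, and observe that now \emph{no} letters of $\mathcal{K}$ occur in the relators, so strong boundedness is trivial. Your proposal contains all the ingredients for this, but the logical order and the reason the alphabet change is forced should be made explicit.
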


\begin{proof}
The proof relies on results from \cite{Hull}. We assume the reader to be familiar with the material discussed in Sections 4 and 5 of \cite{Hull}; to keep this paper reasonably short we do not repeat it here.

We first note that the subsets $\mathcal F_\lambda \subseteq H_\lambda$ can be chosen so that for every $n$ and every $i\in I$, every cyclic permutation of the word $W_i$ satisfies the conditions (W1)--(W4) from \cite[Section 5]{Hull}. Indeed (W1) is obvious,  (W2) can be ensured by taking $\mathcal F_\lambda$ to be the (finite) set of all elements $h\in H_\lambda$ satisfying $\dl (1, h)< 50C$ in the notation of \cite[Section 5]{Hull}, (W3) is guaranteed by (b) and the structure of $W_i$ (the first alternative in the conclusion of (W3) always holds), and (W4) also follows from (b).

From now on, we assume that the subsets $\mathcal F_\lambda \subseteq H_\lambda$ are chosen as explained above. Thus Lemmas 5.1 and 5.2 from \cite[Section 5]{Hull} hold in our settings. Together with our assumption (c), they allow us to repeat the proof of \cite[Proposition 5.3]{Hull} and obtain the following.

\medskip

{\bf Claim. } {\it For every $\e>0$ there exists a constant $M>0$ such that the set $\mathcal R$ of all cyclic shifts of all words $W_i^{\pm 1}$ satisfies the $C(\e, M/n, 1/4, 1, n)$ small cancellation condition.}

\medskip

Although \cite[Proposition 5.3]{Hull} deals with the case $|I|=1$, its proof works almost verbatim in the general case; the only change we need to make is to replace the phrase ``since $\Lab(e)$ only appears once in $W^{\pm 1}$" in the beginning of the third paragraph of the proof with the reference to the condition (c). Note also that \cite[Proposition 5.3]{Hull} proves the $C_1(\e, M/n, 1/4, 1, n)$  condition, which is stronger than $C(\e, M/n, 1/4, 1, n)$ (see \cite[Definition 4.3]{Hull}); however, we do not need this stronger condition in our paper.

Let $$\mathcal K=\bigsqcup_{\mu\in M}K_\mu .$$ Applying Lemma \ref{AB} to the generating alphabets $\mathcal A_1=\mathcal A\sqcup \mathcal K=X\sqcup \mathcal H \sqcup \mathcal K$ and $\mathcal A_2= Y\sqcup \mathcal K $ with the obvious maps $\alpha_1 \colon \mathcal A_1\to G$, $\alpha_2\colon \mathcal A_2\to G$, and $\xi \colon \mathcal A_1\to \mathcal A_2$, we obtain that the set of words $\mathcal R^\prime$ in the alphabet $\mathcal A_2$, obtained from words in $\mathcal R$ by replacing each letter $a\in \mathcal A_1$ with $\xi(a)$, satisfies the same $C(\e, M/n, 1/4, 1, n)$ small cancellation condition.

Note that $\Km\h (G, Y)$  by definition (cf. \cite[Remark 4.26]{DGO}). The set of words $\mathcal R^\prime $ is strongly bounded with respect to the collection of hyperbolically embedded subgroups $\Km$ and the relative generating set $Y$ of $G$ in the terminology of \cite[Section 3, p. 1089]{Hull}. Indeed, in our settings, the latter condition means that only finitely many letters from each $K_\mu $ appear in words from $\mathcal R^\prime$; in our case there are no such letters at all. Let $\e$, $\mu$, $\rho $ be the constants provided by \cite[Lemma 4.4]{Hull} for the group $G$, the hyperbolically embedded collection $\Km$, the relative generating set $Y$,  $\lambda=1/4$, $c=1$ and $N=1$.

By choosing sufficiently large $n$ we can ensure that $M/n\le \mu$ and $n\ge \rho$. Observe that the $C(\e, \mu, \lambda, c, \rho)$ condition becomes stronger as $\mu $ decreases and $\rho $ increases. Thus taking $n$ large enough, we can apply \cite[Lemma 4.4]{Hull} to the set of words $\mathcal R^\prime$  and conclude that the natural homomorphism $G\to \overline{G}$ is injective on the set $Y\cup \left(\bigcup_{\mu\in M} K_\mu\right)$ and we have $\{ \gamma(K_\mu)\}_{\mu\in M}\h (\overline{G}, \gamma(Y)).$
\end{proof}

\begin{rem} Note that in the proof of Proposition \ref{SCQ} we could not apply \cite[Lemma~4.4]{Hull} directly to the set $\mathcal R$,
of all cyclic permutations of words from $\mathcal{W}^{\pm 1}$. Indeed, if  the set $I$ is infinite,
infinitely many letters from $\mathcal{H}$ may appear in the words from $\mathcal{W}$
(because $\alpha(I)$ or $\beta(I)$ may be infinite subsets of $\Lambda$), which would imply that $\mathcal{R}$ is not
strongly bounded with respect to the collection of hyperbolically embedded subgroups
$\{H_\lambda\}_{\lambda\in\Lambda} \cup \Km$.
\end{rem}

\section{Constructing uniformly hyperbolically embedded subgroups} \label{sec:uniform-hyp}

We say that a metric space $(S,\d)$ is \emph{$\delta$-hyperbolic} for some constant $\delta \ge 0$ if it satisfies the \emph{Gromov $4$-point condition}:
for all $x,y,z,w \in S$ one has
\[ \d(x,z)+\d(y,w) \le \max\{\d(x,y)+\d(z,w),\d(x,w)+\d(z,y)\}+\delta.\] This condition is well-known to be equivalent to other definitions of hyperbolicity
(see \cite[Ch. 2, {\S\S} 2.4, 2.3, 2.21] {G-dlH}).
We say that a metric space is \emph{hyperbolic} if it is $\delta$-hyperbolic for some $\delta \ge 0$.

\begin{defn} Let $\Sigma$ be a graph and let $C$ be a natural number. The \emph{$C$-expansion} of $\Sigma$ is a graph $\Xi$ obtained from
$\Sigma $ by adding an edge between every two vertices $u,v$ that are at most $C$ apart in $\Sigma$.
\end{defn}

\begin{lem}\label{lem:exp-hyp}
Let $\Sigma $ be a connected $\delta$-hyperbolic graph, let $C\in \mathbb N$ and let $\Xi$ be the $C$-expansion of $\Sigma$. Then $\Xi $ is
$(\delta/C +6)$-hyperbolic.
\end{lem}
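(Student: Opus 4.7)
The plan is to transport the hyperbolicity of $\Sigma$ to $\Xi$ in two stages: first at the level of vertices, via a direct metric comparison, and then from vertices to arbitrary points of the graph $\Xi$ by a standard perturbation argument. The final additive constant $6$ will split as $2+4$, where the $2$ comes from rounding in the division by $C$, and the $4$ comes from moving arbitrary points to nearby vertices.

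First I would establish the comparison
\[
\frac{\d_\Sigma(u,v)}{C}\,\le\, \d_\Xi(u,v) \,\le\, \frac{\d_\Sigma(u,v)}{C}+1
\]
for all $u,v\in V(\Xi)=V(\Sigma)$, where $\d_\Sigma$ and $\d_\Xi$ denote the respective graph metrics. The lower bound holds because each edge of $\Xi$ joins vertices at $\d_\Sigma$-distance at most $C$, so any $\Xi$-edge-path of length $k$ lifts to a $\Sigma$-path of length at most $kC$. The upper bound follows by cutting any $\d_\Sigma$-geodesic from $u$ to $v$ into $\lceil \d_\Sigma(u,v)/C \rceil$ consecutive sub-paths of length at most $C$, each of which collapses to a single edge of $\Xi$.

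Next, for any four vertices $x,y,z,w\in V(\Xi)$ I would apply the Gromov $4$-point condition in $\Sigma$ and divide the resulting inequality by $C$. Using the upper estimate above on the two summands on the left costs an additive $2$, whereas the lower estimate, applied inside the maximum on the right, costs nothing. This yields the Gromov $4$-point condition for $\Xi$ restricted to vertices, with constant $\delta/C+2$.

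Finally, I would extend to arbitrary points of $\Xi$ by the usual perturbation: every point $p\in \Xi$ lies within distance $\tfrac{1}{2}$ of some vertex $\bar p$, so $|\d_\Xi(p,q)-\d_\Xi(\bar p,\bar q)|\le 1$ for any two points $p,q$. Substituting $\bar x,\bar y,\bar z,\bar w$ into the vertex inequality and passing back to the original points loses at most $2$ on the left (one per summand) and at most $2$ on the right (each of the two candidates inside the maximum can grow by at most $2$), for a total extra error of $4$. The full Gromov $4$-point condition then holds on $\Xi$ with constant $\delta/C+2+4=\delta/C+6$, as desired. There is no conceptual obstacle here; the only mild care needed is the bookkeeping of additive errors so that exactly $6$ emerges at the end.
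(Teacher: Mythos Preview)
Your proof is correct and follows essentially the same approach as the paper: both arguments use the identical two-sided metric comparison $\d_\Sigma(u,v)/C \le \d_\Xi(u,v)\le \d_\Sigma(u,v)/C+1$ on vertices, apply the $4$-point condition in $\Sigma$, and approximate arbitrary points of $\Xi$ by vertices within distance $1/2$, with the same $2+2+2=6$ bookkeeping of additive errors. The only cosmetic difference is that the paper interleaves the vertex-approximation step with the metric comparison rather than first proving a vertex version with constant $\delta/C+2$, but the underlying computation is the same.
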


\begin{proof} Let $\d_\Sigma$ and $\d_\Xi$ denote the standard path metrics on $\Sigma$ and $\Xi$ respectively. Let $u,v$ be any two vertices of $\Xi$.
The definition of $\Xi$ implies that $u,v$ are also vertices of $\Sigma$ and
\begin{equation}\label{eq:dist_ineq}
\d_{\Sigma}(u,v) \le C \d_\Xi(u,v) \mbox{ and } \d_{\Xi}(u,v)\le 
\frac{1}{C} \d_{\Sigma}(u,v)+1.
\end{equation}

Now consider arbitrary points $x,y,z,w$ in $\Xi$, and choose vertices $x',y',z',w'$ of $\Xi$ such that $\d_{\Xi}(x,x') \le 1/2, \ldots , \d_{\Xi}(w,w') \le 1/2$. Then, in view of \eqref{eq:dist_ineq}, we have

\[
\d_\Xi(x,z)+\d_\Xi(y,w)\le \d_\Xi(x',z')+1+\d_\Xi(y',w')+1\le \frac1C \left(\d_{\Sigma}(x',z')+ \d_\Sigma(y',w') \right)+4.
\]
Recalling that $\Sigma$ is $\delta$-hyperbolic and using \eqref{eq:dist_ineq} again, we get
\[
\begin{array}{rcl}
\d_\Xi(x,z)+\d_\Xi(y,w)& \le & \frac1C \left(\max\{\d_{\Sigma}(x',y')+ \d_\Sigma(z',w'),\d_{\Sigma}(x',w')+ \d_\Sigma(z',y')\} +\delta \right)+4
\\&&\\& \le & \max\{\d_{\Xi}(x',y')+ \d_\Xi(z',w'),\d_{\Xi}(x',w')+ \d_\Xi(z',y')\} +\frac{\delta}{C}+4
\\&&\\& \le & \max\{\d_{\Xi}(x,y)+ \d_\Xi(z,w),\d_{\Xi}(x,w)+ \d_\Xi(z,y)\} +\frac{\delta}{C}+6.
\end{array}
\]
Hence the graph $\Xi$ satisfies Gromov $4$-point condition with the constant $(\delta/C +6)$.
\end{proof}

Recall that a subset $Q$ of a geodesic metric space $S$ is $\e$-\emph{quasiconvex}, for some $\e \ge 0$,
if for any two points $x,y \in Q$ any geodesic path joining $x$ and $y$ in
$S$ lies within the closed $\e$-neighborhood of $Q$ in $S$.

\begin{lem}\label{lem:qc_in_exp} There exists a constant $\varkappa \ge 0$ such that for any $\e, \delta \ge 0$ the following holds.
Let $\Sigma$ be a connected $\delta$-hyperbolic graph and let $Q$ be an $\e$-quasiconvex subset of vertices of $\Sigma$, for some $\delta,\e \ge 0$. Then for each positive integer $C\ge \max\{\delta, \e\}$, the $C$-expansion $\Xi$ of $\Sigma$ is
$7$-hyperbolic and $Q$ is $\varkappa$-quasiconvex in $\Xi$.
\end{lem}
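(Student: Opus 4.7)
The plan is to prove the two assertions separately. The $7$-hyperbolicity is immediate: since $C \ge \delta$ we have $\delta/C \le 1$, so Lemma~\ref{lem:exp-hyp} gives that $\Xi$ is $(\delta/C + 6)$-hyperbolic, hence $7$-hyperbolic. All the real work is in producing a universal quasi-convexity constant $\varkappa$ for $Q$ in $\Xi$.

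For the quasi-convexity claim, I would fix $x,y \in Q$ and a $\Xi$-geodesic $\alpha$ with vertex sequence $x=v_0,v_1,\dots,v_k=y$. For each $i$ the $\Xi$-edge $[v_i,v_{i+1}]$ comes from the $C$-expansion, so $\d_\Sigma(v_i,v_{i+1}) \le C$; I would lift $\alpha$ to a path $p$ in $\Sigma$ by replacing each $\Xi$-edge with a $\Sigma$-geodesic $p_i$ of length at most $C$. I then claim that $p$ is a $(1,O(C))$-quasi-geodesic in $\Sigma$. The upper bound $\d_\Sigma(p(s),p(t)) \le t-s$ is the usual length estimate. For the lower bound, if $p(s)$ lies on $p_i$ and $p(t)$ lies on $p_j$ with $i \le j$, then on the one hand $t-s \le C(j-i+1)$, and on the other hand the right inequality in \eqref{eq:dist_ineq} applied to the $\Xi$-geodesic gives $\d_\Sigma(v_i,v_{j+1}) \ge C(j-i)$, so after adjusting for the at-most-$C$ excursions at the two ends one gets $\d_\Sigma(p(s),p(t)) \ge (t-s) - O(C)$. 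A careful bookkeeping yields the constant $5C$ but the precise value is immaterial.

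Next I would invoke the Morse stability lemma for $(1,c)$-quasi-geodesics in a $\delta$-hyperbolic space: there is a universal constant $A$ such that $p$ lies within $\Sigma$-Hausdorff distance $A(C+\delta)$ of any $\Sigma$-geodesic $[x,y]_\Sigma$. Using $\e$-quasi-convexity of $Q$ in $\Sigma$ we get $[x,y]_\Sigma \subseteq N^{\Sigma}_{\e}(Q)$ and hence $p \subseteq N^{\Sigma}_{A(C+\delta)+\e}(Q)$. Since $C \ge \max\{\delta,\e\}$, this neighborhood radius is at most $(2A+1)C$. Now the second inequality of \eqref{eq:dist_ineq} translates a $\Sigma$-distance bound of $(2A+1)C$ into a $\Xi$-distance bound of $(2A+1)C/C + 1 = 2A+2$. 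In particular every $v_i$ is at $\Xi$-distance at most $2A+2$ from $Q$; accounting for midpoints of $\Xi$-edges then gives quasi-convexity in $\Xi$ with a constant such as $\varkappa = 2A + 3$, which does not depend on $\delta$, $\e$ or $C$.

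The main obstacle I anticipate is verifying that the Morse stability constant for $(1,c)$-quasi-geodesics in $\delta$-hyperbolic spaces is genuinely linear in $c$ and $\delta$ (i.e., of the form $A(c+\delta)$ for a universal $A$), rather than depending on length or with a worse functional form, since it is this linearity together with $C \ge \max\{\delta,\e\}$ that lets the factor of $C$ cancel when we pass from $\Sigma$-distance to $\Xi$-distance and thereby makes $\varkappa$ universal. This linear form for $(1,c)$-quasi-geodesics is standard (for example, from the exponential divergence / Shchur's bound specialized to $\lambda=1$), so I would simply cite it from \cite{G-dlH} or a similar reference rather than reprove it.
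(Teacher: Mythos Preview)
Your argument is correct and is at heart the same as the paper's, but the paper organizes it more cleanly in a way that dissolves exactly the ``main obstacle'' you flag. Instead of working in $\Sigma$ and needing to know that the Morse constant for $(1,c)$-quasi-geodesics in a $\delta$-hyperbolic space is linear in $c+\delta$, the paper first rescales the metric: let $\Sigma_C$ be $\Sigma$ with edge-lengths $1/C$. Since $C\ge\max\{\delta,\e\}$, the space $\Sigma_C$ is $1$-hyperbolic and $Q$ is $1$-quasiconvex in it, and by \eqref{eq:dist_ineq} the vertex sequence of a $\Xi$-geodesic is a $(1,1)$-quasi-segment in $\Sigma_C$. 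One then applies the Morse lemma once, with the fixed parameters $(1,1,1)$, to get a universal constant $H=H(1,1,1)$, and translates back to $\Xi$ as you do. Your approach and the paper's are equivalent via this rescaling (indeed, the linearity of the Morse constant that you need is itself most easily seen by rescaling), but the paper's formulation removes any need to cite or justify the linear dependence separately. A minor stylistic point: the paper also skips your step of lifting the $\Xi$-geodesic to a continuous path in $\Sigma$ and verifying the $(1,O(C))$ estimate, since the discrete vertex sequence is already a $(1,1)$-quasi-segment in $\Sigma_C$.
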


\begin{proof} Suppose that $C \in \N$, $C \ge \max\{\delta, \e\}$.
Then $\Xi$, the $C$-expansion of $\Sigma$ equipped with the standard path metric $\d_\Xi$, is $7$-hyperbolic by Lemma \ref{lem:exp-hyp}.
Let $\Sigma_C$ denote the geometric realization of the graph $\Sigma$ where all the edges are assumed to be isometric to the interval $[0,1/C]$. Thus the metric $\d_{\Sigma_C}$  on $\Sigma_C$, can be obtained by rescaling the standard path metric $\d_\Sigma$ on $\Sigma$: for all $x,y \in \Sigma$ one has $\d_{\Sigma_C}(x,y)=\frac1C \d_\Sigma(x,y)$.
Obviously $\Sigma_C$ is $1$-hyperbolic (as $\delta/C \le 1$) and $Q$ is $1$-quasiconvex in $\Sigma_C$ (as $\e/C \le 1$).

The inequalities \eqref{eq:dist_ineq} clearly imply that for all vertices $u,v$ of $\Sigma$ we have
\begin{equation}\label{eq:Xi-Sigma_C}
   \d_{\Sigma_C}(u,v) \le \d_\Xi(u,v) \le \d_{\Sigma_C}(u,v)+1,
\end{equation}
i.e., the natural identification of the vertex set of $\Xi$ with the vertex set of $\Sigma_C$ is a $(1,1)$-quasi-isometry.

Thus for every geodesic path $p$ connecting two vertices in $\Xi$, its vertex set forms the image of a (discrete) $(1,1)$-quasi-segment in $\Sigma_C$, using the terminology of
\cite[Ch. 5, \S~1.2]{G-dlH}. Therefore, by \cite[Ch. 5, \S~1.6]{G-dlH}, this vertex set lies within an $H$-neighborhood of any geodesic path in $\Sigma_C$
which has the same endpoints as $p$, where $H=H(1,1,1)$ is some global constant.

Now, let $p$ be a geodesic path connecting two vertices $a, b \in Q$ in $\Xi$, and let $x$ be any point of $p$. Then there is a vertex $u$ of $p$ such that $\d_\Xi(x,u) \le 1/2$.
Let  $q$ be a geodesic between $a$ and $b$ in $\Sigma_C$. By the above discussion, $u$ lies in the $H$-neighborhood of a vertex $w \in q$ in $\Sigma_C$. On the other hand, the $1$-quasiconvexity of $Q$ in $\Sigma_C$ implies that there is a vertex $v \in Q$ such that $\d_{\Sigma_C}(w,v)\le 1$, hence $\d_{\Sigma_C}(u,v)\le H+1$.
Recalling \eqref{eq:Xi-Sigma_C}, we obtain
$\d_{\Xi}(u,v) \le H+2$, hence $\d_\Xi(x,v) \le H+5/2$.

Thus we can conclude that $Q$ is $\varkappa$-quasiconvex in $\Xi$,
where $\varkappa=H+5/2$ is a global constant (independent of $\delta$, $\e$, etc.), and the lemma is proved.
\end{proof}

The following observation will be useful.

\begin{lem}\label{lem:add_gens-hyp_emb} Suppose that $G$ is a group, $X_1 \subseteq G$ and $\Hl$ is a collection of subgroups in $G$. Let $X_2 \subseteq G$ be a subset
such that $X_1 \subseteq X_2 \subseteq \langle X_1 \rangle$ and $\sup\{|x|_{X_1}\mid x \in X_2\}<\infty$. Then $\Hl \h (G,X_1)$ if and only if $\Hl \h (G,X_2)$.
\end{lem}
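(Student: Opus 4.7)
The plan is to show that the identity map on $G$ realises a bi-Lipschitz quasi-isometry between the two relative Cayley graphs $\Sigma_i := \Gamma(G, X_i \sqcup \mathcal H)$, $i=1,2$, with Lipschitz constant depending only on $N := \sup\{|x|_{X_1} \mid x \in X_2\} < \infty$, and moreover that analogous bi-Lipschitz estimates hold between the metrics $\hd_\lambda^{(1)}$ and $\hd_\lambda^{(2)}$ on each $H_\lambda$. Conditions (a) and (b) of Definition \ref{hedefn} will then transfer between the two alphabets directly, giving the desired equivalence.

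Since $X_1 \subseteq X_2$ and the two alphabets share the same $\mathcal H$-part, $\Sigma_1$ is a subgraph of $\Sigma_2$, so $d_{\mathcal A_2} \le d_{\mathcal A_1}$ pointwise on $G$, and $\hd_\lambda^{(2)} \le \hd_\lambda^{(1)}$ on $H_\lambda$: a path in $\Sigma_1$ avoiding $\Gamma(H_\lambda, H_\lambda)$ remains such a path in $\Sigma_2$, because no new $\mathcal H$-edges were introduced. Conversely, each edge of $\Sigma_2$ labelled by some $a \in X_2 \setminus X_1$ can be replaced by a path of at most $N$ edges in $\Sigma_1$ labelled entirely by elements of $X_1$, obtained from any word in $X_1^{\pm 1}$ of length $\le N$ representing $\alpha(a)$. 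Applying this replacement to every $X_2$-edge of a given path in $\Sigma_2$ produces a path in $\Sigma_1$ of length at most $N$ times the original, giving $d_{\mathcal A_1} \le N \cdot d_{\mathcal A_2}$; moreover, the detour edges are $X_1$-labelled and hence lie outside $\Gamma(H_\lambda, H_\lambda)$, so the avoidance condition is preserved, yielding $\hd_\lambda^{(1)} \le N \cdot \hd_\lambda^{(2)}$ as well.

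With these estimates in hand, hyperbolicity of $\Sigma_1$ is equivalent to hyperbolicity of $\Sigma_2$, because hyperbolicity of a geodesic metric space is a quasi-isometry invariant and both Cayley graphs are geodesic; this handles condition (a) of Definition \ref{hedefn}. From the comparison $\hd_\lambda^{(2)} \le \hd_\lambda^{(1)} \le N \cdot \hd_\lambda^{(2)}$, a ball of finite radius in one of the two metrics is sandwiched between balls of finite radii in the other, so condition (b) holds for one alphabet if and only if it holds for the other. The only point requiring care is verifying that the $X_2$-to-$X_1$ edge replacement preserves avoidance of $\Gamma(H_\lambda, H_\lambda)$; this is immediate since $X_1$ and $\mathcal H$ are disjoint parts of the alphabet, and no genuine obstacle arises.
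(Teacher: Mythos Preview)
Your proof is correct and follows essentially the same approach as the paper: establish that the identity map is $N$-bi-Lipschitz between the two relative Cayley graphs, deduce the equivalence of hyperbolicity, and check that the same bi-Lipschitz comparison holds for the metrics $\hd_\lambda$ so that local finiteness transfers. You are in fact slightly more explicit than the paper in verifying that the $X_2$-to-$X_1$ edge replacement preserves avoidance of $\Gamma(H_\lambda,H_\lambda)$; the only point you leave implicit (and which the paper states) is that $X_1\sqcup\mathcal H$ generates $G$ if and only if $X_2\sqcup\mathcal H$ does, which is immediate from $\langle X_1\rangle=\langle X_2\rangle$.
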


\begin{proof} Let $\H=\bigsqcup_{\lambda \in \Lambda} H_\lambda$. Obviously the set $X_1 \sqcup \H$ generates $G$ if and only if the set $X_2 \sqcup \H$ generates $G$,
as $\langle X_1 \rangle=\langle X_2 \rangle$.
Moreover, the assumptions $X_1 \subseteq X_2$ and $\sup\{|x|_{X_1}\mid x \in X_2\}=C<\infty$ imply that the natural inclusion of the Cayley graph
$\Gamma(G,X_1\sqcup \H)$ in $\Gamma(G,X_2\sqcup \H)$ is $C$-bi-Lipschitz. Hence  the Cayley graph
$\Gamma(G,X_1\sqcup \H)$ is hyperbolic if and only if $\Gamma(G,X_2\sqcup \H)$ is hyperbolic (cf. \cite[Ch. 5, \S{} 2.12]{G-dlH}).

For each $\lambda \in \Lambda$ and $i\in \{1,2\}$ let $\hd_\lambda^i:H_\lambda\times H_\lambda \to [0,+\infty]$ denote the metric,
given by  Definition \ref{def:d_hat},
coming from the Cayley graph $\Gamma(G,X_i\sqcup \H)$. Note that $\hd_\lambda^2(1,h)\le \hd_\lambda^1(1,h) \le C\hd_\lambda^2(1,h)$ for all $h \in H_\lambda$.
Therefore the metric $\hd_\lambda^1$ is locally finite on $H_\lambda$ if and only if $\hd_\lambda^2$ is locally finite on $H_\lambda$.
Thus  $\Hl \h (G,X_1)$ if and only if $\Hl \h (G,X_2)$.
\end{proof}

We will also make use of \cite[Corollary 2.4]{KR} (see also \cite[Proposition 2.3]{KR}), which we state in a much simplified form below (in the notation of \cite{KR}, we have $L=M_1=1$ and $M_2=M$).

\begin{lem}\label{KR}
For every $\delta_0, M\ge 0$, there exists $\delta_1\ge 0$ such that the following holds. Let $\Theta$ be a graph obtained from a connected $\delta_0$-hyperbolic graph $\Xi $ by adding edges. Suppose that for any two vertices $x,y$ of $\Xi $, connected by an edge in $\Theta$, and any geodesic $p$ in $\Xi$ going from $x$ to $y$, the diameter of $p$ in $\Theta$ is at most $M$. Then $\Theta$ is $\delta_1$-hyperbolic.
\end{lem}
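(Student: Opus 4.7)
The plan is to show that any $\Theta$-geodesic and any $\Xi$-geodesic with the same endpoints lie at uniformly bounded $\Theta$-Hausdorff distance, and then to transfer the thin triangle condition from $\Xi$ to $\Theta$. For every edge $e=(x,y)$ of $\Theta$, fix a $\Xi$-geodesic $\gamma_e$ from $x$ to $y$; by hypothesis, $\diam_\Theta(\gamma_e)\le M$ (trivially so when $e$ is a $\Xi$-edge). Given a $\Theta$-geodesic $\alpha=e_1\cdots e_n$, the concatenation $\tilde\alpha:=\gamma_{e_1}\cdots\gamma_{e_n}$ is a $\Xi$-path between the endpoints of $\alpha$, all of whose vertices lie within $\Theta$-distance $M$ of vertices of $\alpha$.

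The key claim is that any $\Xi$-geodesic $\beta$ between the endpoints $u,v$ of $\alpha$ is $\Theta$-Hausdorff close to $\alpha$, with a bound $D=D(\delta_0,M)$. The obstacle is that $\tilde\alpha$ need not be a $\Xi$-quasi-geodesic---consecutive pieces $\gamma_{e_i}$, $\gamma_{e_{i+1}}$ may backtrack along each other---so the Morse Lemma on stability of quasi-geodesics is not directly available. The strategy is to exploit $\delta_0$-hyperbolicity of $\Xi$ in a hands-on way: for each vertex $w\in\beta$, apply the thin triangle condition in $\Xi$ iteratively to the triangles formed by $\beta$ and consecutive pairs of pieces $\gamma_{e_i}$, using the hypothesis $\diam_\Theta(\gamma_{e_i})\le M$ to convert $\Xi$-distances into $\Theta$-distances whenever the argument lands inside some $\gamma_{e_i}$. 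This produces a vertex of $\tilde\alpha$ at $\Theta$-distance at most $D/2$ from $w$, and hence a vertex of $\alpha$ at $\Theta$-distance at most $D$ from $w$. The symmetric statement that every vertex of $\alpha$ is $\Theta$-close to $\beta$ follows by an analogous argument, using that the vertices of $\alpha$ already appear as vertices of $\tilde\alpha$.

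Given a $\Theta$-geodesic triangle with sides $\alpha_1,\alpha_2,\alpha_3$, one then compares it with the $\Xi$-geodesic triangle $\beta_1,\beta_2,\beta_3$ on the same vertices. The latter is $\delta_0$-thin in $\Xi$, hence also in $\Theta$ because $\d_\Theta\le\d_\Xi$; combining with the $D$-$\Theta$-Hausdorff closeness of $\alpha_i$ to $\beta_i$ yields $(\delta_0+2D)$-thinness of the $\Theta$-triangle. The standard equivalence between thin triangles and the Gromov $4$-point condition (see \cite[Ch. 2]{G-dlH}) then produces $\delta_1=\delta_1(\delta_0,M)$ for which $\Theta$ is $\delta_1$-hyperbolic. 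The main difficulty is the comparison between $\alpha$ and $\beta$ in the previous paragraph: the absence of a $\Xi$-quasi-geodesic property for $\tilde\alpha$ forces one to invoke the hypothesis on $\diam_\Theta(\gamma_e)$ in an essential combinatorial way rather than as a black box.
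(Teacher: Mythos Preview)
The paper does not prove this lemma; it simply quotes it from Kapovich--Rafi \cite{KR}, where the argument rests on the ``guessing geodesics'' criterion (due to Bowditch and, in related forms, Hamenst\"adt and Masur--Schleimer). That criterion says: if for every pair of vertices $x,y$ of $\Theta$ one assigns a path $\eta(x,y)$ so that (i) $\diam_\Theta \eta(x,y)\le M$ whenever $\d_\Theta(x,y)\le 1$, and (ii) triangles built from these paths are uniformly thin in $\Theta$, then $\Theta$ is hyperbolic. Taking $\eta(x,y)$ to be a $\Xi$-geodesic, (i) is exactly the hypothesis of the lemma and (ii) follows from $\delta_0$-hyperbolicity of $\Xi$ together with $\d_\Theta\le\d_\Xi$. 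Crucially, this criterion \emph{never compares $\Theta$-geodesics to the $\eta$-paths}; that comparison only comes out a posteriori from the Morse lemma once hyperbolicity is established.

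Your proposal tries to prove that comparison directly, and the iterative thin-triangle step does not deliver a uniform bound. Concretely: with $\alpha=e_1\cdots e_n$ a $\Theta$-geodesic, $p_0,\dots,p_n$ its vertices, $\beta$ a $\Xi$-geodesic from $p_0$ to $p_n$, and $\beta_i$ a $\Xi$-geodesic from $p_i$ to $p_n$, the $\delta_0$-thinness of the triangle on $p_{i-1},p_i,p_n$ tells you that a point $w_{i-1}\in\beta_{i-1}$ is $\delta_0$-close in $\Xi$ to $\gamma_{e_i}\cup\beta_i$. If it lands on $\beta_i$ you iterate, and after $k$ steps you have moved to some $w_k$ with $\d_\Xi(w,w_k)\le k\delta_0$, hence also $\d_\Theta(w,w_k)\le k\delta_0$. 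When you finally land on some $\gamma_{e_j}$ the hypothesis gives an extra $M$, but the accumulated $k\delta_0$ can be as large as $n\delta_0$, which depends on $\d_\Theta(u,v)$ and is not bounded by any $D(\delta_0,M)$. (A binary subdivision reduces this to $O(\delta_0\log n)$, still unbounded.) The phrase ``convert $\Xi$-distances into $\Theta$-distances whenever the argument lands inside some $\gamma_{e_i}$'' only helps at the terminal step; it does nothing for the drift incurred along the way. This is precisely the obstacle that the guessing-geodesics lemma is designed to circumvent, and its proof is genuinely nontrivial --- it is the missing ingredient in your outline.
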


Suppose that a group $G$ acts on a hyperbolic metric space $(S,\d)$.
This action is said to be \emph{non-elementary} if for some $s \in S$ the orbit $Gs$ has more than two limit points on the
boundary $\partial S$ (see \cite[\S~8.2.D]{Gro}).
An element $g \in G$ is \emph{loxodromic} if for any point $s \in S$ the map $\Z \to S$, defined by
$n \mapsto g^n s$, is a quasi-isometry. Clearly the order of any loxodromic element is infinite.

Recall that every acylindrically hyperbolic group contains a maximal finite normal subgroup called the \emph{finite radical} of $G$ and denoted $K(G)$ (see \cite[Theorem 2.24]{DGO}).

\begin{lem}\label{ab}
There exists  a constant $D$ with the following property. Let $G$ be an acylindrically hyperbolic group with trivial finite radical. Then there exist elements $a,b \in G$ of infinite order and a generating set $X$ of $G$ satisfying the following conditions:
\begin{enumerate}
\item[(a)] $\{ \langle a\rangle, \langle b\rangle \}\h (G,X)$;
\item[(b)] The Cayley graph $\Gamma (G, X\sqcup \langle a\rangle\sqcup \langle b\rangle)$ is $D$-hyperbolic.
\end{enumerate}
\end{lem}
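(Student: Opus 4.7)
The argument proceeds as follows: first I find loxodromic elements $a, b \in G$ generating a hyperbolically embedded pair of cyclic subgroups; then I enlarge the associated relative generating set so that the resulting Cayley graph has a universal hyperbolicity constant.

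\emph{Step 1 (Choice of $a$ and $b$).} By Theorem \ref{heah} and the standard theory of elementary subgroups from \cite{DGO}, combined with the hypothesis $K(G) = \{1\}$, I find infinite-order loxodromic elements $a, b \in G$ with $E_G(a) = \langle a\rangle$, $E_G(b) = \langle b\rangle$, $\langle a\rangle \cap \langle b\rangle = \{1\}$, and $\{\langle a\rangle, \langle b\rangle\} \h G$. In particular, there exists $Y \subseteq G$ with $\{\langle a\rangle, \langle b\rangle\} \h (G, Y)$, so that $\Sigma = \Gamma(G, Y \sqcup \mathcal H)$ is $\delta_0$-hyperbolic for some $\delta_0 = \delta_0(G)$, where $\mathcal H = \langle a\rangle \sqcup \langle b\rangle$. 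By adjoining $\{a^{\pm 1}, b^{\pm 1}\}$ to $Y$ (which preserves the hyperbolic embedding, since these elements are redundant with $\mathcal H$-edges), I may assume $Y$ is also a classical symmetric generating set of $G$.

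\emph{Step 2 (Definition of $X$ and verification of (a)).} Fix an integer $C \ge \delta_0$ and set
\[
X = \{g \in G : |g|_Y \le C\},
\]
the closed ball of radius $C$ in the Cayley graph $\Gamma(G, Y)$. Then $X$ is a symmetric generating set of $G$ with $Y \subseteq X \subseteq \langle Y \rangle = G$ and $\sup_{x \in X} |x|_Y \le C$. Lemma \ref{lem:add_gens-hyp_emb} applied to $X_1 = Y$ and $X_2 = X$ then yields $\{\langle a\rangle, \langle b\rangle\} \h (G, X)$, establishing (a).

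\emph{Step 3 (Universal hyperbolicity, (b)).} Let $\Xi$ be the $C$-expansion of $\Sigma$. By Lemma \ref{lem:exp-hyp}, $\Xi$ is $(\delta_0/C + 6)$-hyperbolic, hence $7$-hyperbolic since $C \ge \delta_0$. The Cayley graph $\Theta = \Gamma(G, X \sqcup \mathcal H)$ is a spanning subgraph of $\Xi$: every $\Theta$-edge lies in $\Xi$, while each $\Xi$-edge corresponds to an element $z$ with $|z|_{Y \sqcup \mathcal H} \le C$, which can be realized as a word of length at most $C$ in $Y \cup \mathcal H$ and hence as a path of length $\le C$ in $\Theta$ (with each $Y$-letter traversed by an $X$-edge and each $\mathcal H$-letter by an $\mathcal H$-edge). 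Combining this comparison with Lemma \ref{lem:qc_in_exp} (universal quasiconvexity constants for the $\mathcal H$-cosets in $\Xi$) and Lemma \ref{KR}, one concludes that $\Theta$ is $D$-hyperbolic for some universal constant $D$, proving (b).

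\emph{Main obstacle.} The delicate point is the universality of $D$ in Step 3. A naive $(C,0)$-quasi-isometry between $\Theta$ and $\Xi$ yields only a $C$-dependent (hence $\delta_0$-dependent) hyperbolicity constant for $\Theta$. Obtaining a truly universal $D$ requires exploiting the specific geometry of the ``missing'' $\Xi$-edges -- they all arise from $\mathcal H$-shortcuts through quasiconvex $\mathcal H$-cosets -- and arguing via Lemma \ref{KR}, applied to the passage between $\Theta$ and $\Xi$, that their absorption deteriorates the thin-triangle constants only by a universal amount. Any attempt to bypass this by choosing $X$ to be the ball in $\Gamma(G, Y\sqcup\mathcal H)$ rather than in $\Gamma(G,Y)$ would immediately break the local finiteness of $\hd$ in condition (a), since all of $\mathcal H$ would be absorbed into $X$; thus the subtle interplay between these two competing requirements -- universal hyperbolicity and local finiteness of $\hd$ -- is exactly what makes the proof nontrivial.
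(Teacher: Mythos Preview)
Your Step~3 contains a genuine gap: Lemma~\ref{KR} is being invoked in the wrong direction. That lemma takes a $\delta_0$-hyperbolic graph $\Xi$ and shows that a graph $\Theta$ obtained from $\Xi$ by \emph{adding} edges is $\delta_1$-hyperbolic (under the stated diameter hypothesis). In your setup, however, $\Theta = \Gamma(G, X\sqcup\mathcal H)$ is a spanning \emph{subgraph} of your $\Xi$ (the $C$-expansion of $\Sigma=\Gamma(G,Y\sqcup\mathcal H)$), so $\Xi$ is obtained from $\Theta$ by adding edges, not the other way round. Lemma~\ref{KR} would thus let you deduce hyperbolicity of $\Xi$ from hyperbolicity of $\Theta$ --- the reverse of what you need. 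There is no general principle allowing one to \emph{remove} edges from a hyperbolic graph while keeping a universal hyperbolicity constant, and the $\varkappa$-quasiconvexity of the $\mathcal H$-cosets in $\Xi$ that you cite does not, by itself, bridge this gap. Your ``Main obstacle'' paragraph correctly identifies the difficulty but the proposed resolution does not resolve it.

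The paper avoids this problem by choosing a different intermediate graph. It first picks $Y$ so that $\Gamma(G,Y)$ \emph{itself} is $\delta$-hyperbolic with an acylindrical non-elementary $G$-action (via \cite[Theorem~1.2]{Osi16}), and then uses \cite[Lemma~2.5]{AMS} to see that $\langle a\rangle,\langle b\rangle$ are $\e$-quasiconvex \emph{in $\Gamma(G,Y)$}, not merely in $\Gamma(G,Y\sqcup\mathcal H)$. Taking $C\ge\max\{\delta,\e\}$, the $C$-expansion $\Gamma(G,X)$ of $\Gamma(G,Y)$ is then $7$-hyperbolic with $\langle a\rangle,\langle b\rangle$ $\varkappa$-quasiconvex in it, by Lemma~\ref{lem:qc_in_exp}. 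Now $\Theta=\Gamma(G,X\sqcup\mathcal H)$ is obtained from $\Xi=\Gamma(G,X)$ by \emph{adding} the $\mathcal H$-edges, and the universal quasiconvexity of $\langle a\rangle,\langle b\rangle$ in $\Xi$ supplies precisely the diameter bound $M=2\varkappa+1$ required by Lemma~\ref{KR}. Your argument never establishes hyperbolicity of $\Gamma(G,Y)$ or quasiconvexity of the cyclic subgroups there, so this route is not available in your setup as written.
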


\begin{proof} By \cite[Theorem 1.2]{Osi16} there exists a generating set $Y$ of $G$ such that the Cayley graph $\Gamma(G,Y)$ is
$\delta$-hyperbolic, for some $\delta \ge 0$, and the natural
action of $G$ on it is acylindrical and non-elementary.
Since the finite radical of $G$ is assumed to be trivial, $G$ is a `suitable' subgroup of itself, in the terminology of \cite{Hull}.
Therefore we can use \cite[Corollary 5.7]{Hull} to find loxodromic
elements $a,b \in G$ such that $\{\langle a\rangle,\langle b \rangle\} \h (G,Y)$.

Now, by \cite[Lemma 2.5]{AMS} the cyclic subgroups $\langle a \rangle$, $\langle b \rangle$ are $\e$-quasiconvex in $\Gamma(G,Y)$, for some $\e \ge 0$.
Choose any $C \in \N$ with $C \ge \max\{\delta,\e\}$, and let
$$
X=\{ x \in G \mid |x|_Y \le C\}.
$$

Observe that the Cayley graph $\Gamma(G,X)$ is the $C$-expansion of the Cayley graph $\Gamma(G,Y)$, so, according to Lemma \ref{lem:qc_in_exp},
$\Gamma(G,X)$ is $7$-hyperbolic  and the subgroups $\langle a \rangle$, $\langle b \rangle$ are $\varkappa$-quasiconvex in it
(where $\varkappa$ is the global constant from that lemma).
Moreover, Lemma~\ref{lem:add_gens-hyp_emb} ensures that $\{\langle a \rangle,\langle b \rangle\} \h (G,X)$, so it remains to show that
$\Gamma (G, X\sqcup \langle a\rangle\sqcup \langle b\rangle)$ is $D$-hyperbolic for some global constant $D \ge 0$.

Let $\delta_0=7$ and $M=2\varkappa+1$. We will now check that the assumptions of Lemma~\ref{KR} are satisfied, where $\Xi=\Gamma(G,X)$ and
$\Theta=\Gamma(G,X \sqcup \langle a \rangle\sqcup \langle b \rangle)$.
Let $u,v \in G$ be two adjacent vertices of $\Theta$, let $p$ be any geodesic joining $u$ with $v$ in $\Xi$, and let $w,z$ be arbitrary points on $p$.

If $p$ is a single edge then $\d_\Theta(w,z)\le \d_\Xi(w,z)\le 1 \le M$.
Otherwise, $p$ must be labelled by a power of $a$ or $b$. Let us consider the case when $p$ is labelled by
$a^n$, for some $n \in \Z$, as the other case is similar. Then the $\varkappa$-quasiconvexity of $\langle a \rangle$ in $\Xi$ implies that there exist $k,l \in \Z$ such that
$\d_\Xi(w,u a^k) \le \varkappa$ and $\d_\Xi(z,u a^l)\le \varkappa$. Moreover, since $(ua^k)^{-1}(ua^l)=a^{l-k} \in \langle a \rangle$, the vertices
$ua^k$ and $ua^l$ are adjacent in $\Theta$. Therefore
\[
\begin{array}{rcl}
\d_\Theta(w,z) &\le &\d_\Theta(w,ua^k)+\d_\Theta(ua^k,ua^l)+\d_\Theta(ua^l,z) \\&&\\ & \le &\d_\Xi(w,ua^k)+1+\d_\Xi(ua^l,z)\le 2\varkappa+1=M.
\end{array}
\]
Thus the universal constant $M$ is an upper bound for the diameter of $p$ in $\Theta$. Now we can apply Lemma \ref{KR} to find a universal constant $D \ge 0$ such that
the Cayley graph $\Gamma(G,X \sqcup \langle a \rangle\sqcup \langle b \rangle)=\Theta$ is $D$-hyperbolic, as required.
\end{proof}

\begin{lem}\label{Gi}
Let $\{ G_i\}_{i\in I}$ be a collection of groups. Suppose that for every $i\in I$ we have a collection of subgroups $\{H_{ij}\}_{j\in J_i}$ and
a generating  set $X_i$  of $G_i$, relative to $\{H_{ij}\}_{j\in J_i}$, such that
\begin{enumerate}
\item[(a)] $\{H_{ij}\}_{j\in J_i}\h (G_i, X_i)$, for all $i \in I$, and
\item[(b)] the hyperbolicity constants of the Cayley graphs $\Gamma (G_i, X_i\sqcup \mathcal H_i)$, where $\mathcal H_i=\bigsqcup_{j\in J_i} H_{ij}$, $i \in I$,
 are uniformly bounded.
\end{enumerate}
Then the collection $\{ H_{ij}\mid  i\in I, j\in J_i \}$ is hyperbolically embedded in the free product $G=\ast_{i\in I} G_i$ with respect to $X=\bigcup _{i\in I} X_i$.
\end{lem}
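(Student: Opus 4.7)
My plan is to check the two clauses of Definition \ref{hedefn} for the collection $\{H_{ij}\}_{i \in I,\, j \in J_i}$ relative to $X$ in $G$. Let $\delta_0$ be a uniform upper bound for the hyperbolicity constants from assumption (b), and set $\mathcal H = \bigsqcup_{i\in I} \mathcal H_i$, so that $\mathcal A := X \sqcup \mathcal H$ is the generating alphabet of $G$ of the form required by \eqref{calH}.

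First I would exhibit a tree-of-spaces decomposition of $\Gamma(G, \mathcal A)$. For each $g \in G$ and $i \in I$, let $\Gamma_{g,i}$ be the subgraph of $\Gamma(G, \mathcal A)$ on vertex set $gG_i$ spanned by edges whose labels lie in $X_i \sqcup \mathcal H_i$. Left translation by $g$ realizes an isomorphism of labelled graphs $\Gamma(G_i, X_i \sqcup \mathcal H_i) \to \Gamma_{g,i}$, so each $\Gamma_{g,i}$ is $\delta_0$-hyperbolic. The subgraphs $\Gamma_{g,i}$ cover every edge of $\Gamma(G, \mathcal A)$, and by the normal form theorem for the free product $G = \ast_{i \in I} G_i$, any two distinct $\Gamma_{g_1, i_1},\Gamma_{g_2, i_2}$ share at most one vertex; the resulting incidence pattern is the Bass--Serre tree $T$ of $G$.

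A key auxiliary fact is that each $\Gamma_{g,i}$ is isometrically embedded in $\Gamma(G, \mathcal A)$. To see this, fix the retraction $\pi_i \colon G \to G_i$ killing every factor $G_k$ with $k \ne i$: any word $w$ over $\mathcal A$ projects letter by letter to a word over $X_i \sqcup \mathcal H_i$ of length at most $\|w\|$, representing $\pi_i(w_{\phantom{j}})$ in $G_i$. Applied to any path in $\Gamma(G, \mathcal A)$ connecting two vertices of $gG_i$, this yields a path in $\Gamma_{g,i}$ of no greater length. For clause (a), I would then invoke the classical principle that a tree of uniformly hyperbolic spaces glued along single vertices (i.e.\ with trivial edge spaces) is itself hyperbolic with a constant depending only on $\delta_0$. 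Concretely, any geodesic in $\Gamma(G, \mathcal A)$ concatenates $\Gamma_{g,i}$-geodesics tracking a geodesic in $T$, so a geodesic triangle splits along a tripod in $T$, and uniform $\delta_0$-slimness inside each $\Gamma_{g,i}$ upgrades to uniform slimness of the whole triangle.

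Clause (b) reuses the same retraction trick. For $h \in H_{ij}$, let $p$ be any path from $1$ to $h$ in $\Gamma(G, \mathcal A)$ avoiding edges labelled in $H_{ij}$; applying $\pi_i$ letterwise to $\Lab(p)$ deletes every letter outside $X_i \sqcup \mathcal H_i$, still avoids letters from $H_{ij}$, and represents $h$ in $G_i$. Hence, writing $\hd^{G_i}_{ij}$ for the analogue of $\hd_{ij}$ computed inside $\Gamma(G_i, X_i \sqcup \mathcal H_i)$, we obtain $\hd^{G_i}_{ij}(1,h) \le \hd_{ij}(1,h)$, so local finiteness of $\hd^{G_i}_{ij}$ on $H_{ij}$, given by assumption (a), transfers to $\hd_{ij}$. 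The principal obstacle is the combination step for hyperbolicity in clause (a): although folklore, the ``tree of hyperbolic spaces with trivial edge spaces'' principle is not cited elsewhere in the paper, so one either has to spell out the slim-triangle argument along $T$ in detail, or apply Lemma \ref{KR} to pass from a suitable thickening of $T$ to $\Gamma(G, \mathcal A)$ while tracking constants.
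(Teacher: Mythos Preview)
Your outline for clause~(a) is the paper's proof at one remove: the paper makes explicit exactly the tripod-and-components argument you sketch (its Remark~\ref{rem:observ} is the normal-form observation that two simple paths with the same endpoints have matching $G_i$-component decompositions, and from this $\delta$-slimness of geodesic triangles in $\Gamma(G,X\sqcup\mathcal H)$ follows directly). So on that half the approaches coincide.

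There is a real gap in your argument for clause~(b), however. By Definition~\ref{def:d_hat}, a path realising $\hd_{ij}(1,h)$ must avoid only edges of the subgraph $\Gamma(H_{ij},H_{ij})$, not all $H_{ij}$-labelled edges; it may well traverse an $H_{ij}$-labelled edge based at a vertex outside $H_{ij}$. Your letterwise retraction $\pi_i$ does not preserve this condition. For instance, with $G_1=\langle a\rangle\times\langle b\rangle$, $H=\langle a\rangle$, $X_1=\{b^{\pm 1}\}$ and $G_2=\langle t\rangle$, the path
\[
1\to t\to ta\to t\to 1\to b\to ba\to a
\]
in $\Gamma(G_1*G_2,\mathcal A)$ avoids $\Gamma(H,H)$ (the $a^{\pm1}$-edges sit at $t$ and $b$, neither in $H$), yet deleting the $t^{\pm1}$-letters yields the path $1\to a\to 1\to b\to ba\to a$ in $\Gamma(G_1,X_1\sqcup H)$, whose first edge lies in $\Gamma(H,H)$. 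Thus the asserted inequality $\hd^{G_i}_{ij}(1,h)\le\hd_{ij}(1,h)$ is not established by projection. The paper sidesteps this by passing instead to a \emph{simple} subpath $q'$ of the given path $q$; the normal-form observation then forces $q'$ to consist of a single $G_i$-component lying inside $\Gamma(G_i,X_i\sqcup\mathcal H_i)$, and since the edge set of $q'$ is contained in that of $q$, the condition ``avoid $\Gamma(H_{ij},H_{ij})$'' is inherited verbatim. Your tree-of-spaces setup makes this fix immediate, but the retraction shortcut as written does not go through.
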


\begin{proof} Suppose that the Cayley graphs $\Gamma(G_i,X_i\sqcup \H_i)$ are $\delta$-hyperbolic, for some fixed $\delta \ge 0$, and all $i \in I$.
After increasing $\delta$, we can assume that $\delta \ge 1$ and geodesic triangles in each of these Cayley graphs are $\delta$-slim, i.e, each side of such a triangle
is contained in the $\delta$-neighborhood of the two other sides (see \cite[Ch. 2, \S{} 3.21]{G-dlH}).
Let us first show that all geodesic triangles in $\Gamma(G,X \sqcup \H)$ are also $\delta$-slim.

Let us first establish some convenient terminology. Given a path $p$ in $\Gamma(G,X \sqcup \H)$, its label is a concatenation of
subwords, each of which is written over the alphabet $X_i \sqcup \H_i$, for some $i \in I$.
Given any  $i \in I$, a \emph{$G_i$-component of $p$} is a subpath of $p$ labelled by a non-empty word over the alphabet
$X_i \sqcup \H_i$, which is not contained in a larger subpath of $p$ with this property.

Let us now make the following observation, which is an immediate consequence of the uniqueness of normal forms of elements in free products.

\begin{rem}\label{rem:observ} If $p$ and $q$ are two simple paths in $\Gamma(G,X \sqcup \H)$ with $p_-=q_-$ and $p_+=q_+$ then $p=p_1p_2\dots p_n$,
$q=q_1q_2\dots q_n$, where for each $k \in \{1,\dots,n\}$ there is $i_k \in I$ such that $p_k$ and $q_k$ are $G_{i_k}$-components of $p$ and $q$ respectively, and
$(p_k)_-=(q_k)_-$, $(p_k)_+=(q_k)_+$.
\end{rem}

Suppose that  $\Delta$ is a simplicial geodesic triangle in $\Gamma(G,X \sqcup \H)$, and $p$ is any side of it. Let $q$ and $r$ denote the two remaining sides of
$\Delta$. Without loss of generality we can re-orient $p,q,r$ so that $p_-=q_-$, $q_+=r_-$ and $r_+=p_+$.
Let $x$ be any point of $p$. Then $x$ belongs to a $G_i$-component $p_i$ of $p$, for some $i \in I$. Let $v$ be the first vertex of $q$ which also belongs to $r$
(see Figure \ref{fig:triangle}), and let
$q'$, $r'$ be the subpaths of $q$ and $r$ respectively, such that $q'_-=q_-$, $q'_+=v=r'_-$ and $r'_+=r_+$. Note that the paths $p$,$q$ and $r$ are simple because
they are geodesic in $\Gamma(G,X \sqcup \H)$, and the choice of $v$ implies that the path $s=q'r'$ is also simple. Since $p_-=s_-$ and $p_+=s_+$, we can use
Remark~\ref{rem:observ} to find a $G_i$-component $s_i$, of $s$, such that $(s_i)_-=(p_i)_-$ and $(s_i)_+=(p_i)_+$.

\begin{figure}[!ht]
  \begin{center}
   \includegraphics{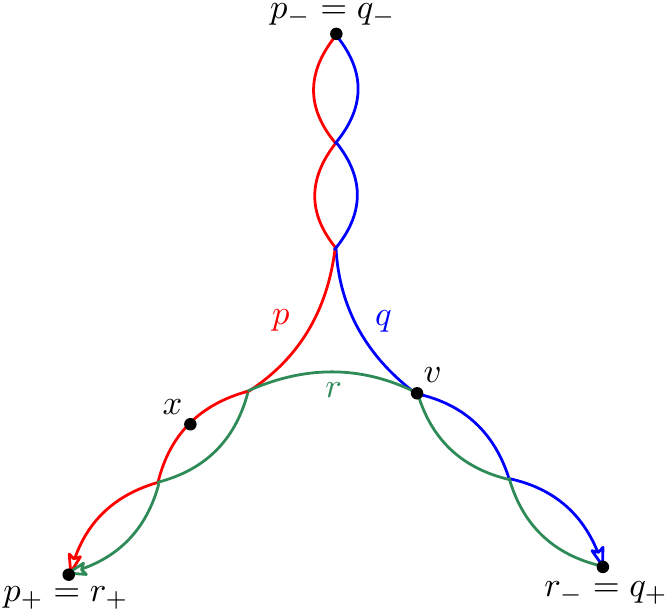}
  \end{center}
  \caption{A generic geodesic triangle $\Delta$ in $\Gamma(G,X \sqcup \H)$}\label{fig:triangle}
\end{figure}

Now, by construction, $s_i$ is either a subpath of $q$ or $r$ or it is a concatenation of a subpath of $q$ ending at $v$ with a subpath of $r$ starting at $v$.
Thus $p_i$ is geodesic and $s_i$ is the union of at most two geodesics in the copy of $\Gamma(G_i, X_i\sqcup \H_i)$ in $\Gamma(G,X \sqcup \H)$ based at $v$.
The $\delta$-slimness of geodesic triangles in $\Gamma(G_i, X_i\sqcup \H_i)$ implies that $x$ is at most $\delta$ away from a point of $s_i$ in $\Gamma(G,X \sqcup \H)$. But $s_i$ is contained in the union of $q$ and $r$, so the geodesic triangle $\Delta$ is $\delta$-slim. Since the latter is true for an arbitrary simplicial geodesic triangle in
$\Gamma(G,X \sqcup \H)$, we can conclude that this Cayley graph is hyperbolic by \cite[Ch. 2, \S{} 3.21]{G-dlH}.

Now, choose any $i \in I$ and $j \in J_i$ and consider the subgroup $H=H_{ij}$, of $G_i \leqslant G$. Suppose that $n \in \N$, $h \in H\setminus \{1\}$,
$p$ is the edge joining $1$ and $h$ and labelled by $h$ in $\Gamma(G,X \sqcup \H)$, and $q$ is any path from $1$ to $h$ in this graph, of length at most $n$,
which avoids the edges from the copy of $\Gamma(H, H)$.
Let $q'$ be a simple path with the same endpoints as $q$, whose set of edges is a subset of the set of edges of $q$ (we can get $q'$ from $q$ by removing all maximal subpaths
of $q$ which start and end at the same vertex). Then, according to Remark~\ref{rem:observ}, $q'$ must consist of a single $G_i$-component (since this is true for $p$ and
$q'_-=1=p_-$, $q'_+=h=p_+$). Thus the
path $q'$ lies inside the Cayley graph $\Gamma(G_i, X_i\sqcup \H_i)$. Clearly the length of $q'$ does not exceed $n$ and it still avoids the edges of $\Gamma(H, H)$.
Recalling that $\{H_{ik}\}_{k\in J_i}\h (G_i, X_i)$, we see that there are only finitely many possibilities for the element $h$ (for a given $n$).
Since this is true for any $n \in \N$ and arbitrary $H_{ij}$, we can conclude that $\{ H_{ij}\mid i\in I, j\in J_i\}\h (G,X)$, as claimed.
\end{proof}

\section{Proofs of the main results}\label{sec:proofs}

We begin with the proof of Theorem \ref{main}. Recall that for an acylindrically hyperbolic group $G$, $K(G)$ denotes its finite radical, i.e., $K(G)$ is the unique maximal finite normal subgroup of $G$.
It is shown in \cite[Lemma 3.9]{MO} (see also \cite[Lemma~1]{MO-erratum}) that the quotient group $G/K(G)$ is also acylindrically hyperbolic and has trivial finite
radical.

\begin{proof}[Proof of Theorem \ref{main}]
Let $G_k$, $k\in \mathbb N$, be a countable set of countable acylindrically hyperbolic groups. Passing to $G_k/K(G_k)$ if necessary, we can assume that the finite radicals of all groups $G_k$ are trivial.

By Lemma \ref{ab}, there exist infinite cyclic subgroups $A_k,B_k\le G_k$ and generating sets $X_k\subseteq G_k$ such that $\{ A_k,B_k\} \h (G_k, X_k)$, and the hyperbolicity constant of the Cayley graph $\Gamma (G_k, X_k\sqcup A_k\sqcup B_k)$ is bounded from above by a universal constant for all $k\in \mathbb N$.

Let $X=\bigcup_{k\in \mathbb N} X_k$ and $G=G_1\ast G_2\ast \cdots $. Note that $X$ generates $G$ and, by Lemma \ref{Gi}, we have $\{ A_k, B_k\}_{k\in \mathbb N} \h (G,X)$.
Combining \cite[Corollary 5.3]{AMS} with \cite[Corollary 3.12]{AMS} we can find an infinite virtually cyclic subgroup $K \leqslant G$ such that
$\{K\} \cup \{ A_k, B_k\}_{k\in \mathbb N} \h (G,X)$.
Let $X=\{ x_1, x_2, \ldots\}$ and let $I=\mathbb N\times \mathbb N$. If $X=\{ x_1, \ldots, x_m\}$, we define $x_{m+1}=x_{m+2}=\dots= 1$ to uniformize our notation. For every $i=(s,t)\in I$,  we consider a word of the form
$$
W_i \equiv x_{s}a_{i1}b_{i1}\ldots a_{in}b_{in},
$$
where $a_{ij}\in A_{t}$, $b_{ij}\in B_{t}$  for all $1\le j\le n$. Note that the intersection of distinct subgroups from a hyperbolically embedded collection is always finite by \cite[Proposition 4.33]{DGO} and therefore $A_k\cap B_k=\{ 1\}$ for all $k$. By Proposition \ref{SCQ}, applied to the group $G$, the set $X$, and the collections
$\Hl=\{ A_k\}_{k \in \N} \cup \{B_k\}_{k\in \mathbb N}$ and $\Km=\{ K\}$, we can chose $n\in \mathbb N$ and finite subsets
$\mathcal{F}_\lambda \subset H_\lambda$, $\lambda \in \Lambda$,
so that the conclusion of the proposition holds as long as the elements $a_{ij}, b_{ij}$ are chosen to satisfy conditions (b) and (c) from the proposition. Since
$A_k$ and $B_k$ are infinite cyclic, $k \in \N$, such a choice is clearly possible.

Let $\overline G$ be the quotient group defined by (\ref{Gbar}). By Proposition \ref{SCQ}, the image of $K$ in $\overline{G}$ is an infinite proper hyperbolically embedded subgroup. Therefore $\overline{G}$ is acylindrically hyperbolic by Theorem \ref{heah}. On the other hand, for every $k\in \mathbb N$, the relations $W_i=1$,
$i\in \mathbb N \times \{k\}$ guarantee that the restriction of the natural homomorphism $G\to\overline{G}$ to $G_k$ is surjective, i.e., $\overline{G}$ is a quotient group of $G_k$.

Finally, we can always add the free group of rank $2$ to the collection $\{G_k\}_{k \in \N}$. This will ensure that $\overline{G}$ can be generated by $2$ elements, as a
quotient of this free group.
\end{proof}

\begin{rem}\label{countrem}
Let us explain why the countability assumptions in Theorem \ref{main} cannot be dropped.
Observe that, given a simple group $S$, any non-trivial quotient of the free product
$S*S$ must contain a copy of $S$. So, if $S$ is an uncountable simple group (e.g., $PSL_2(\mathbb{R})$), then the acylindrically hyperbolic group $S*S$ cannot have a common
quotient with the free group of rank $2$, as $S$ is not a subgroup of any countable group.
On the other hand, in \cite{Camm} Camm showed that there exists a continuum of pairwise non-isomorphic $4$-generated infinite simple groups $S_i$, $i \in I$.
Note that the groups $G_i=S_i*S_i$ are acylindrically hyperbolic, $i \in I$, and any non-trivial common quotient of the family $\{G_i\}_{i \in I}$ will be
generated by $8$ elements and will
contain a copy of $S_j$, for each $j \in I$. But the latter is impossible since a countable group can have at most countably many finitely generated subgroups.
Thus the only common quotient of $\{G_i\}_{i \in I}$ is the trivial group.
\end{rem}

In view of Theorem \ref{main}, to obtain Corollaries \ref{cor1}--\ref{cor3} it suffices to prove the following.
We recall that a hyperbolic group is called {\it non-elementary} if it is not virtually cyclic.

\begin{lem}
Let $Q$ be any common quotient of all non-elementary hyperbolic groups. Then the following hold:
\begin{enumerate}
\item[(a)] $Q$ has property $FL^p$ for all $p\in [1, +\infty)$;
\item[(b)] every action of $Q$ on a finite dimensional contractible topological space has a fixed point;
\item[(c)] every simplicial action of $Q$ on a finite dimensional locally finite contractible simplicial complex is trivial;
\item[(d)] $Q$ is not uniformly non-amenable;
\item[(e)] for every sufficiently large $n\in \mathbb N$ and all $r\in \mathbb N$, there exists a finite generating set $X$ of $Q$ such that every element $g\in Q$ of length $|g|_X\le r$ has order at most $n$.
\end{enumerate}
\end{lem}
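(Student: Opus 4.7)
For each of the five claims the strategy will be the same: exhibit, or invoke from the literature, a non-elementary hyperbolic group $H$ that already possesses a (usually quantitative) form of the desired feature, and then push the feature down to $Q$ along a surjection $H\twoheadrightarrow Q$ provided by the hypothesis.

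To handle (a), I would begin with the observation that $FL^p$ is inherited by quotients: if $\pi\colon H\twoheadrightarrow Q$ is surjective and $Q$ acts by affine isometries on an $L^p$-space, then $H$ acts via $\pi$ and any $H$-fixed vector is automatically $Q$-fixed. It therefore suffices to know that for each $p\in[1,+\infty)$ there exists some non-elementary hyperbolic group $H_p$ with $FL^p$: for $p$ near $2$ one can take a cocompact lattice in $Sp(n,1)$ (hyperbolic, of property (T), hence $FL^p$ by \cite{BFGM}); for other values of $p$ one appeals to examples coming from $L^p$-cohomology vanishing or from property $(T_p)$-type constructions. Parts (b) and (c) can similarly be reduced to the core technical input of \cite{Aetal}: for every $n\in\mathbb N$ there exists a non-elementary hyperbolic group $H_n$ such that every action of $H_n$ on a Hausdorff contractible space of covering dimension at most $n$ (respectively every simplicial action on a locally finite contractible complex of dimension at most $n$) has a fixed point (respectively is trivial). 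Every such action of $Q$ pulls back to one of $H_n$, so the conclusion descends to $Q$ for every $n$.

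For (e), I plan to use the hyperbolic approximations of free Burnside groups due to Ol'shanskii, Ivanov, and Lysenok: for each $r\in\mathbb N$ and all sufficiently large $n$, and for some fixed $m\ge 2$, there is a non-elementary hyperbolic group $H_{r,n}$ with an $m$-element generating set $Y$ in which every element of $Y$-word length at most $r$ has order at most $n$. Pushing $Y$ forward along a surjection $H_{r,n}\twoheadrightarrow Q$ yields a finite generating set $X$ of $Q$ with the same property, since orders only decrease under quotients. For (d), I would apply (e) with $r=1$ and let $n\to\infty$, producing a sequence of finite generating sets $X_n$ of $Q$ consisting of torsion elements of order at most $n$; a combinatorial argument in the spirit of \cite{A,Osi02}, exploiting the specific Cayley-graph structure inherited from $H_{1,n}$, should then drive the F\o lner constants $\mbox{F\o l}(Q,X_n)$ to $0$, showing that $Q$ is not uniformly non-amenable (and so, by the implication recalled in the introduction, that its left regular representation is not uniformly isolated from the trivial representation).

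The principal difficulty will be twofold. First, (a) cannot be handled by a single hyperbolic group, since by Yu \cite{Y} no hyperbolic group has $FL^p$ simultaneously for all $p$; the group $H_p$ must genuinely depend on $p$, and covering the whole range $(1,+\infty)$ --- especially very large $p$ --- is the delicate part. Second, for (d) the mere fact that all generators are torsion of bounded order is not enough to make the F\o lner constant decay, and one really has to use the geometric structure of the Ol'shanskii--Ivanov--Lysenok hyperbolic Burnside approximations, not just the orders of the generators.
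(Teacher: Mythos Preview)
Your overall strategy---finding a non-elementary hyperbolic group with the desired property and pushing it to $Q$---is exactly what the paper does, and your treatment of (b) and (e) matches the paper's proof essentially verbatim. The differences and genuine gaps are in (a), (c), and (d).

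For (a), your outline is correct but the existence input you invoke is too vague. Cocompact lattices in $Sp(n,1)$ only give $FL^p$ for $p$ in an interval around $2$, and ``$L^p$-cohomology vanishing or property $(T_p)$-type constructions'' is not a reference. The paper supplies the missing ingredient concretely: by Dru\c{t}u--Mackay \cite{DM} (together with \cite{NS,BFGM,BGM}), a random hyperbolic group in a suitable density model has $FL^p$ for all $1\le p\le n$, for any prescribed $n$; letting $n\to\infty$ and using that $FL^p$ passes to quotients finishes the argument.

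For (c) there is a real gap. You claim that \cite{Aetal} provides, for each $n$, a non-elementary hyperbolic group $H_n$ whose every simplicial action on a locally finite contractible complex of dimension $\le n$ is \emph{trivial}. That is not what \cite{Aetal} proves; their hyperbolic input is only the fixed-point property (which you correctly use for (b)). Indeed, a non-elementary hyperbolic group acts properly and cocompactly on its Rips complex, and typically has plenty of finite quotients, so asking for \emph{pointwise triviality} of all such actions is a much stronger and unjustified hypothesis. The paper deduces (c) differently: it first observes that $Q$ has no proper finite-index subgroups (because $Q$ is a quotient of $Alt(n)\ast Alt(n)$ for every $n\ge 5$), and then combines this with the fixed point from (b) and local finiteness to force the whole action to be trivial.

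For (d) your plan is not a proof, as you yourself note: bounded-order torsion generators do not by themselves force the F\o lner constants to zero, and there is no ``combinatorial argument in the spirit of \cite{A,Osi02}'' waiting to be filled in here. The paper takes a completely different and much simpler route: there is a chain $N_1\lhd N_2\lhd\cdots\lhd F$, with $F$ free of rank $2$, such that each $H_i=F/N_i$ is non-elementary hyperbolic and $F/\bigcup_i N_i\cong \mathbb{Z}_2\wr\mathbb{Z}$ is amenable. Hence $\mbox{F\o l}(H_i,X_i)\to 0$ for the images $X_i$ of a free basis, and since F\o lner constants are monotone under epimorphisms, pushing $X_i$ down to $Q$ gives generating sets $Y_i$ with $\mbox{F\o l}(Q,Y_i)\to 0$.
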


\begin{proof}
(a) For every $n\in \mathbb N$, there exists a non-elementary hyperbolic group $H$ such that for all $1\le p\le n$ every isometric action of $H$ on an $L^p$-space fixes a point. In fact, a random hyperbolic group, in a suitable density model, satisfies this property for every given $n$ almost surely \cite{DM,NS} (note that, by \cite[Theorem~1.3]{BFGM} and \cite[Corollary~D]{BGM}, property $FL^p$ for $p \in [1,2]$ follows from property $FL^2$). Since $FL^p$ passes to quotients, the result follows.

(b) Let $\mathcal{X}_n$ denote the class of contractible Hausdorff topological spaces of covering dimension $n$, $n \in \N$.
By \cite[Theorem 1.7]{Aetal} for each $n \in \N$ there exists a non-elementary hyperbolic group $G_n$ such that any action of $G_n$ by homeomorphisms on any space
$S \in \mathcal{X}_n$ has a global fixed point. Since $Q$ is a quotient of $G_n$, for every $n \in \N$, $Q$ does not admit a fixed point-free action on any space from
$\mathcal{X}=\bigcup_{n \in \N} \mathcal{X}_n$, as required.

(c) This can be derived using the same argument as in the proof of \cite[Corollary 1.2]{Aetal}. Indeed, first let us observe that the free product $Alt(n)*Alt(n)$, $n \ge 5$,
is a non-elementary hyperbolic group which does not contain any proper normal subgroups of index less than $n!/2$. Since $Q$ is a quotient of such groups for all $n \ge 5$, $Q$
cannot have any proper subgroups of finite index at all.
Now, if $Q$ acts on a finite dimensional simplicial complex $\mathcal{C}$, then, by claim (b), $Q$ should fix some point $p \in \mathcal{C}$. It follows that for every $R>0$ and $k \in \N\cup \{0\}$,  $Q$ acts on the set of $k$-simplices intersecting the ball of radius $R$ centered at $p$ in $\mathcal C$.
This set is finite, provided $\mathcal{C}$ is locally finite, thus, since $Q$ has no proper finite index subgroups, such an action must fix each simplex of $\mathcal C$.
Hence the action of $Q$ on $\mathcal{C}$ must be trivial.

(d) It was noted in \cite{Osi02b} that there exists a sequence of normal subgroups
$$
N_1\lhd N_2\lhd \ldots \lhd F,
$$
where $F$ is the free group of rank $2$ such that $H_i=F/N_i$ is non-elementary hyperbolic and $F/\bigcup_{i=1}^\infty N_i$ is amenable
(in fact, it is isomorphic to $\mathbb Z_2\, {\rm wr }\, \mathbb Z$.)
Let $X_i$ be the image of the standard basis of $F$ in $H_i$. It follows that the F\o{}lner constants of $H_i$ with respect to $X_i$ satisfy
$\mbox{F\o l} (H_i, X_i)\to 0$ as $i\to \infty$ \cite[Corollary 12.3]{A}. Let $Y_i$ be the image of $X_i$ under a surjective homomorphism $H_i\to Q$.
It is easy to see that F\o lner constants are monotone under epimorphisms, i.e., $\mbox{F\o l} (Q, Y_i)\le \mbox{F\o l} (H_i, X_i)$ in our notation
(see, for example, \cite[Theorem 4.1]{A}). Thus we have $\lim\limits_{i\to \infty} \mbox{F\o l} (Q, Y_i)=0$, i.e., $Q$ is not uniformly non-amenable.

(e) Let $n$ be a sufficiently large natural number. It follows from the work of Novikov-Adyan \cite{Ad} (for $n$ odd),  Ivanov \cite{I} and Lysenok \cite{Lys} (for $n$ even) and the isoperimetric characterization of hyperbolic groups that for every $r\in \mathbb N$, there exists a non-elementary hyperbolic group $H_r$, generated by a finite set $X_r$, such that every element $h\in H_r$ of length $|h|_{X_r}\le r$ has order at most $n$. This fact can also be found in an explicit form in the paper \cite{IO} (for all sufficiently large $n$). Given $r\in \mathbb N$, let $Y_r$ be the image of $X_r$ under the surjective homomorphism $H_r\to Q$. It is clear that every element $g\in Q$ of length $|g|_{Y_r}\le r$ has order at most $n$.
\end{proof}


\begin{thebibliography}{99}
\bibitem{Ad}
S.I. Adian, The Burnside problem and identities in groups. \emph{Ergebnisse der Mathematik und ihrer Grenzgebiete}, \textbf{95}. \emph{Springer-Verlag, Berlin-New York}, 1979.

\bibitem{AMS}
Y. Antolin, A. Minasyan, A. Sisto, Commensurating endomorphisms of acylindrically hyperbolic groups and applications, \emph{Groups Geom. Dyn.} \textbf{10} (2016), no. 4, 1149-1210.

\bibitem{Aetal}
G. Arzhantseva, M. Bridson, T. Januszkiewicz, I. Leary, A. Minasyan, J. Swiatkowski, Infinite groups with fixed point properties, \emph{Geom. Topol.} \textbf{13} (2009), no. 3, 1229-1263.

\bibitem{A}
G.N. Arzhantseva, J. Burillo, M. Lustig, L. Reeves, H. Short, E. Ventura, Uniform non-amenability, \emph{Adv. Math.} \textbf{197} (2005), no. 2, 499-522.

\bibitem{AAS}
J.W. Anderson, J. Aramayona, K.J. Shackleton, Uniformly exponential growth and mapping class groups of surfaces. In the tradition of Ahlfors-Bers. IV, 1–6, \emph{Contemp. Math.} \textbf{432}, Amer. Math. Soc., \emph{Providence, RI}, 2007.

\bibitem{DGO}
F. Dahmani, V. Guirardel, D. Osin, Hyperbolically embedded subgroups and rotating families in groups acting on hyperbolic spaces,
\emph{Memoirs AMS}  \textbf{245} (2017), no. 1156.

\bibitem{BFGM}
U. Bader, A. Furman, T. Gelander, N. Monod, Property (T) and rigidity for actions on Banach spaces,
\emph{Acta Math.} \textbf{198} (2007), no. 1, 57-105.

\bibitem{BGM} U. Bader, T. Gelander, N. Monod, A fixed point theorem for $L^1$ spaces, \emph{Invent. Math.} 189 (2012), no. 1, 143-148.

\bibitem{Camm} R. Camm, Simple free products. \emph{J. London Math. Soc.} \textbf{28} (1953), 66-76.

\bibitem{CD}
I. Chatterji, F. Dahmani, Proper actions on $\ell^p$ spaces for relatively hyperbolic groups.\\ \texttt{arXiv:1801.08047}

\bibitem{DM}
C. Dru{t}u, J.M. Mackay, Random groups, random graphs and eigenvalues of $p$-Laplacians, \emph{Adv. Math.} \textbf{341} (2019), 188-254.

\bibitem{G-dlH} \'E. Ghys, P. de la Harpe, Sur les groupes hyperboliques d'apr\`{e}s Mikhael Gromov, Progress in Mathematics, \textbf{83}. Birkh\"{a}user Boston, Inc., Boston, MA, 1990.

\bibitem{Gro}
M. Gromov, Hyperbolic groups, \emph{Essays in group theory}, 75-263,
Math. Sci. Res. Inst. Publ., \textbf{8}, \emph{Springer, New York}, 1987.

\bibitem{GST}
D. Gruber, A. Sisto, R. Tessera, Gromov's random monsters do not act non-elementarily on hyperbolic spaces. \texttt{arXiv:1705.10258}

\bibitem{Hae}
T. Haettel, Hyperbolic rigidity of higher rank lattices. \texttt{arXiv:1607.02004}

\bibitem{H}
U. Hamenst\"adt, Bounded cohomology and isometry groups of hyperbolic spaces,
\emph{J. Eur. Math. Soc.} \textbf{10} (2008), no. 2, 315-349.

\bibitem{Hull}
M. Hull, Small cancellation in acylindrically hyperbolic groups, \emph{Groups Geom. Dyn.} \textbf{10} (2016), no. 4, 1077-1119.

\bibitem{HO} M. Hull, D. Osin, Induced quasi-cocycles on groups with hyperbolically embedded subgroups, \emph{Alg. \& Geom. Topol.}, \textbf{13} (2013) 2635-2665.

\bibitem{I}
S.V. Ivanov, The free Burnside groups of sufficiently large exponents, \emph{Internat. J. Algebra Comput.} \textbf{4} (1994), no. 1-2.

\bibitem{IO}
S.V. Ivanov, A.Yu. Olʹshanskii,  Hyperbolic groups and their quotients of bounded exponents, \emph{Trans. Amer. Math. Soc.} \textbf{348} (1996), no. 6, 2091-2138.

\bibitem{KR}
I. Kapovich, K. Rafi, On hyperbolicity of free splitting and free factor complexes, \emph{Groups Geom. Dyn.} \textbf{8} (2014), no. 2, 391-414.

\bibitem{Kou}
M. Koubi, Croissance uniforme dans les groupes hyperboliques, \emph{Ann. Inst. Fourier}   \textbf{48} (1998), 1441-1453.

\bibitem{Lys}
I.G. Lysenok, Infinite Burnside groups of even period, \emph{Izv. Math.} \textbf{60} (1996), no. 3, 453-654.

\bibitem{Mann}
A. Mann, How groups grow. London Mathematical Society Lecture Note Series,\textbf{ 395}. Cambridge University Press, Cambridge, 2012.

\bibitem{MO}
A. Minasyan, D. Osin, Acylindrical hyperbolicity of groups acting on trees, \emph{Math. Ann.} \textbf{362} (2015), no. 3-4, 1055-1105.

\bibitem{MO-erratum} A. Minasyan, D. Osin, Erratum to the paper ``Acylindrical hyperbolicity of groups acting on trees''. \texttt{arXiv:1711.09486}

\bibitem{NS}
A. Naor and L. Silberman, Poincar\'e inequalities, embeddings, and wild groups, \emph{Compos. Math.} \textbf{147} (2011). no. 5, 1546-1572.

\bibitem{Osi02b}
D. Osin, Kazhdan constants of hyperbolic groups, \emph{Funct. Anal. Appl.} \textbf{36} (2002), no. 4, 290-297

\bibitem{Osi02}
D. Osin, Weakly amenable groups. \emph{Computational and statistical group theory (Las Vegas, NV/Hoboken, NJ, 2001)}, 105–113. {Contemp. Math.} \textbf{298},
\emph{Amer. Math. Soc., Providence, RI}, 2002.

\bibitem{Osi10}
D. Osin, Small cancellations over relatively hyperbolic groups and embedding theorems, \emph{Ann. Math.} {\bf 172} (2010), no. 1, 1-39.

\bibitem{Osi16} D. Osin, Acylindrically hyperbolic groups, \emph{Trans. Amer. Math. Soc.} \textbf{368} (2016), no. 2, 851-888.

\bibitem{Osi17} D. Osin, Groups acting acylindrically on hyperbolic spaces, \emph{Proc. Int. Cong. of Math.-- 2018,
Rio de Janeiro} \textbf{1} (2018), 915-936.

\bibitem{Ols93}
A.Yu. Olshanskii,  On residualing homomorphisms and
$G$-subgroups of hyperbolic groups, {\it Int. J. Alg. Comp.} {\bf 3}
(1993), no. 4, 365-409.

\bibitem{Sh}
Y. Shalom, Explicit Kazhdan constant for representations of semisimple and arithmetic groups,
\emph{Ann. Inst. Fourier} \textbf{50} (2000), no. 3, 833-863.

\bibitem{Xie}
X. Xie, Growth of relatively hyperbolic groups,
\emph{Proc. Amer. Math. Soc.} \textbf{135} (2007), no. 3, 695-704.

\bibitem{Y}
G. Yu, Hyperbolic groups admit proper affine isometric actions on $\ell^p$-spaces, \emph{Geom. Funct.
Anal.} \textbf{15} (2005), no. 5, 1144-1151.


\end{thebibliography}
\end{document}